\begin{document}

\title[Rayleigh-Stokes equations]
{On global solvability and regularity for generalized Rayleigh-Stokes equations with history-dependent nonlinearities}
\author[T.D. Ke, N.N. Thang]{Tran Dinh Ke \& Nguyen Nhu Thang$^*$}
\subjclass[2010]{35B40,35R11,35C15,45D05,45K05}
\keywords{Rayleigh-Stokes problem; nonlocal PDE; well-posedness; H\"older continuity}
\thanks{* Corresponding author. Email: thangnn@hnue.edu.vn (N.N.Thang)}
\begin{abstract}
We are concerned with the initial value problem governed by generalized Rayleigh-Stokes equations, where the nonlinearity depends on history states and takes values in Hilbert scales of negative order. The solvability and H\"older regularity of solutions are proved by using fixed point arguments and embeddings of fractional Sobolev spaces. An application to a related inverse source problem is given.
\end{abstract}
\maketitle
\numberwithin{equation}{section}
\newtheorem{theorem}{Theorem}[section]
\newtheorem{lemma}[theorem]{Lemma}
\newtheorem{proposition}[theorem]{Proposition}
\newtheorem{corollary}[theorem]{Corollary}
\newtheorem{definition}{Definition}[section]
\newtheorem{remark}{Remark}[section]
\newtheorem{example}{Example}[section]

\section{Introduction}
Let $\Omega\subset \mathbb R^d$ be a bounded domain with smooth boundary $\partial\Omega$. We consider the following problem 
\begin{align}
\partial_t u - (1+D_t^{\{m\}})\Delta u & = f(u, \mathcal H u)\text{ in } \Omega, t\in (0,T),\label{e1}\\
u & = 0 \text{ on } \partial\Omega,\; t\ge 0,\label{e2}\\
u(0) & = \xi \text{ in } \Omega, \label{e3}
\end{align}
where $f$ is a given function, $\partial_t=\frac{\partial}{\partial t}$,  $D_t^{\{m\}}$ is the nonlocal derivative of Riemann-Liouville type defined by
$$
D_t^{\{m\}} v(t) = \frac{d}{dt}\int_0^t m(t-s)v(s)ds,
$$
with kernel $m\in L^1_{loc}(\mathbb R^+)$, $\mathcal H$ is the convolution operator given by $$\mathcal Hv(t)=\int_0^t \ell (t-\tau)v(\tau)d\tau,\; \ell \in L^1(0,T),$$ which tells us that $f$ depends on the history state of the system.

The proposed system is a general model for some problems studied in literature. Indeed, in the case $m$ is a constant, \eqref{e1} is of classical diffusion type. If $m$ is a regular function, e.g. $m\in C^1(\mathbb R^+)$ then one gets
\begin{equation*}
\partial_t u - (1+m_0)\Delta u -\int_0^t m_1(t-s)\Delta u(s) ds = f,
\end{equation*}
with $m_0=m(0)$ và $m_1(t)=m'(t)$, which is a nonclassical diffusion equation. Let $m(t) = m_0 t^{-\alpha}/\Gamma(\alpha)$ with $m_0>0$, then we see that \eqref{e1} is a Rayleigh-Stokes equation, i.e.
\begin{equation}\label{RS-eq}
\partial_t u - (1+m_0\partial_t^\alpha)\Delta u  = f.
\end{equation}
The constitution of the last equation was formed in \cite{FJFV09,STZM06} to describe the behavior of second-grade fluids. It should be mentioned that, some numerical schemes for  Rayleigh-Stokes equations were developed in \cite{Bazh15,Bi18,Chen13,Chen08}. On the other hand, analytical representations for solution of \eqref{RS-eq} in linear case were obtained in \cite{Khan09,STZM06}, and recently, the regularity for nonlinear Rayleigh-Stokes equations has been established in \cite{Lan20,Luc21,ZW19}. For more studies related to \eqref{RS-eq}, we refer the reader to \cite{Luc19,NLATZ,Tuan19}, where the terminal value problem was carried out. 

In this work, we are interested in the solvability and regularity analysis for problem \eqref{e1}-\eqref{e3} in the circumstance that the nonlinearity function involves the history state expressed by $\mathcal H$. The appearance of $\mathcal Hu$ comes from, e.g. the inverse source problem as presented in the last section. In another way, this factor may arise from control problems, where the feedback requires some history information of the system. One will find that, the term $\mathcal Hu$ causes some difficulties for analyzing regularity of solutions. As an important feature of our study, it is noted that the nonlinearity function $f$ is allowed to take weak values, i.e. $f(u,\mathcal Hu)$ may belong to dual of fractional Sobolev spaces. This enables us to consider the case when $f$ contains polymonial or gradient terms, which have connections with practical applications. This also extends the recent results established in \cite{KT2022,Lan20,Luc21}.

The rest of this note is as follows. In the next section, we first recall some notions and facts related to Hilbert scales and fractional Sobolev spaces. Additionally, a representation for the solution of linear problem will be shown together with some essential estimates for resolvent operator. Section 3 is devoted to proving the main results, including the global existence and H\"older regularity of solutions. It is worth noting, in particular, that the H\"older regularity result in our work can not be obtained by the technique used in \cite{KT2022}, since the differentiability of resolvent operator is unavailable on dual spaces. In order to overcome this impediment, we construct a regular closed subset of solution space, which is invariant under the solution operator, and make use of fixed point arguments. The last section shows an application of the obtained results, where we demonstrate that an inverse source problem governed by Rayleigh-Stokes equations is solvable by transforming it to a prototype of problem \eqref{e1}-\eqref{e3}.

\section{Preliminaries}
\subsection{Functional spaces}
Let $-\Delta$ be the Laplacian associated with the homogenuous Dirichlet boundary condition. Then one has a sequence of eigenfunctions $\{e_n\}$ of $-\Delta$ which forms an orthonormal basis of $L^2(\Omega)$, and we have the following representation 
$$-\Delta v = \sum _{n=1}^\infty \left(\lambda_n \int\limits_\Omega v(x)e_n(x)dx\right)e_n,$$
where $\lambda_n>0$ is the eigenvalue corresponding to the eigenfunction  
$e_n$. For $\varrho\ge 0$, we define the following functional space
$$\mathbb H^\varrho:=\left\{\varphi\in L^2(\Omega) \bigg| \|\varphi\|^2_{\mathbb H^\varrho} := \sum_{n=1}^\infty \left|\lambda_n^{\frac \varrho 2}\int\limits_\Omega \varphi(x)e_n(x)dx  \right |^2<\infty  \right\}.$$
Let $\mathbb H^{-\varrho}$ denote the dual space of $\mathbb H^\varrho$ with respect to the dual pair $\langle \cdot,\cdot \rangle_{-\varrho,\varrho} $ on $\mathbb H^{-\varrho} \times \mathbb H^\varrho$  with the norm
$$  \|\varphi\|^2_{\mathbb H^{-\varrho}} := \sum_{n=1}^\infty \left|\lambda_n^{-\frac \varrho 2} \langle \varphi,e_n \rangle_{-\varrho,\varrho} \right |^2<\infty.$$
Clearly, $\mathbb H^{\varrho_2} \hookrightarrow \mathbb H^{\varrho_1}$ and $\mathbb H^{-\varrho_1} \hookrightarrow \mathbb H^{-\varrho_2}$ with $\varrho_2\ge \varrho_1\ge 0$. The family of Hilbert spaces $\mathbb H^\varrho, \varrho\in\mathbb R$, is said to be the Hilbert scales. 

The fractional Laplacian $(-\Delta)^\gamma$, $\gamma\ge 0$, is defined as follows
\begin{align*}
(-\Delta)^\gamma: \; \mathbb H^{\gamma} &\to \mathbb H^{-\gamma};\\
 (-\Delta)^\gamma \varphi &=  \sum_{n=1}^\infty \lambda_n^\gamma \left(\int\limits_\Omega \varphi(x)e_n(x)dx \right)e_n.
\end{align*}
Then, $\|(-\Delta)^\gamma \varphi\|_{\mathbb H^{-\gamma}}  = \|\varphi\|_{\mathbb H^{\gamma}}$. 

We now recall the notion of the fractional Sobolev spaces (see, e.g. \cite{Dem, Di} for details). For $r \in(0,1)$ and $p \in[1,+\infty)$, the functional space 
$$
W^{r, p}(\Omega):=\left\{u \in L^{p}(\Omega): \int_{\Omega} \int_{\Omega} \frac{|u(x)-u(y)|^{p}}{|x-y|^{d+p r}} d x d y<\infty\right\}
$$
is called the fractional Sobolev space with the norm
$$
\|u\|_{W^{r, p}(\Omega)}=\left(\int_{\Omega}|u|^{p} d x+\int_{\Omega} \int_{\Omega} \frac{|u(x)-u(y)|^{p}}{|x-y|^{d+p r}} d x d y\right)^{1 / p}
$$
For $r \geq 1$, denote by $[r]$ the integral part of $r$, we define $$W^{r, p}(\Omega):=\left\{u \in W^{[r], p}(\Omega): D^{\eta} u \in W^{r-[r], p}(\Omega), \forall |\eta| \leq[r]\right\}$$ which is a Banach space with the norm
$$
\|u\|_{W^{r, p}(\Omega)}=\left(\|u\|_{W^{[r], p}}^{p}(\Omega)+\sum_{|\eta|=[r]} \int_{\Omega} \int_{\Omega} \frac{\left|D^{\eta} u(x)-D^{\eta} u(y)\right|^{p}}{|x-y|^{d+p(r-[r])}} d x d y\right)^{1 / p}.
$$
Denote
$$
W_{0}^{r, p}(\Omega):=\overline{C_{c}^{\infty}(\Omega)}^{W^{r, p}(\Omega)}, H^{r}(\Omega):=W^{r, 2}(\Omega), H_{0}^{r}(\Omega):=W_{0}^{r, 2}(\Omega).
$$
Assume that $\Omega$  is a domain with sufficiently smooth boundary  such that $C_{c}^{\infty}(\Omega)$ is dense in $H^{r}(\Omega)$ with $0<r<1 / 2$, then  $H_{0}^{r}(\Omega)=H^{r}(\Omega)$ (see \cite[Corollary 8.10.1]{Bha}). It follows from \cite{BSV15} that
\begin{equation}\label{hr}
\mathbb{H}^{r}= \begin{cases}H_{0}^{r}(\Omega), & 0 \leq r<1 / 2, \\ H_{00}^{1 / 2}(\Omega) \varsubsetneqq H_{0}^{1 / 2}(\Omega), & r=1 / 2, \\ H_{0}^{r}(\Omega), & 1 / 2<r \leq 1, \\ H_{0}^{1}(\Omega) \cap H^{r}(\Omega), & 1<r \leq 2,\end{cases}
\end{equation}
where $H_{00}^{1 / 2}(\Omega)$ is the Lions-Magenes space determined by
$$
H_{00}^{1 / 2}(\Omega)=\left\{u \in H^{1 / 2}(\Omega): \int_{\Omega} \frac{|u(x)|^{2}}{(\operatorname{dist}(x, \partial \Omega))^{2}} d x<\infty\right\}.
$$
Then the following lemma is a direct consequence of \eqref{hr}.

\begin{lemma}\label{lm:em} Denote by $H^{-r}(\Omega)$ the dual space of $H_{0}^{r}(\Omega)$ with $r \geq 0$. If $0 \leq r \leq r^{\prime} \leq 2$, then
	$$
	\mathbb{H}^{r^{\prime}} \hookrightarrow \mathbb{H}^{r} \hookrightarrow H^{r}(\Omega) \hookrightarrow L^{2}(\Omega) \hookrightarrow H^{-r}(\Omega) \hookrightarrow \mathbb{H}^{-r} \hookrightarrow \mathbb{H}^{-r^{\prime}}.
	$$
\end{lemma}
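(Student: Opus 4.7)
The plan is to establish the seven embeddings in the chain by working from the extremes inward, and then invoking duality to halve the amount of direct work needed. The middle embedding $H^r(\Omega)\hookrightarrow L^2(\Omega)$ is classical and requires no comment. The three remaining ``leftward'' embeddings can be treated directly; the three ``rightward'' ones are then automatic by dualizing with respect to the $L^2$ pivot.

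First I would prove $\mathbb{H}^{r'}\hookrightarrow\mathbb{H}^r$ for $0\le r\le r'$ from the spectral definition. Writing $\varphi_n = \int_\Omega \varphi(x)e_n(x)\,dx$, one has
\begin{equation*}
\|\varphi\|_{\mathbb{H}^r}^2=\sum_{n=1}^\infty \lambda_n^{r}|\varphi_n|^2 =\sum_{n=1}^\infty \lambda_n^{r-r'}\lambda_n^{r'}|\varphi_n|^2\le \lambda_1^{r-r'}\|\varphi\|_{\mathbb{H}^{r'}}^2,
\end{equation*}
since $\lambda_n\ge\lambda_1>0$ and $r-r'\le 0$. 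Next, the embedding $\mathbb{H}^r\hookrightarrow H^r(\Omega)$ is read off from the identification \eqref{hr}: for $r\in[0,1/2)\cup(1/2,1]$ we have $\mathbb{H}^r=H_0^r(\Omega)\subset H^r(\Omega)$; for $r\in(1,2]$, $\mathbb{H}^r=H_0^1(\Omega)\cap H^r(\Omega)\subset H^r(\Omega)$; and for $r=1/2$, $\mathbb{H}^{1/2}=H_{00}^{1/2}(\Omega)\hookrightarrow H_0^{1/2}(\Omega)\subset H^{1/2}(\Omega)$, where the inclusion $H_{00}^{1/2}\hookrightarrow H_0^{1/2}$ is continuous by construction.

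For the right half of the chain I would argue by duality. The inclusion $L^2(\Omega)\hookrightarrow H^{-r}(\Omega)$ follows by identifying $L^2(\Omega)$ with its own dual and using $H_0^r(\Omega)\hookrightarrow L^2(\Omega)$: any $f\in L^2(\Omega)$ acts on $H_0^r(\Omega)$ via the $L^2$ inner product, with $|\langle f,v\rangle|\le \|f\|_{L^2}\|v\|_{L^2}\le C\|f\|_{L^2}\|v\|_{H^r}$. Similarly, $H^{-r}(\Omega)\hookrightarrow\mathbb{H}^{-r}$ comes from dualizing $\mathbb{H}^r\hookrightarrow H_0^r(\Omega)$: every functional in $H^{-r}(\Omega)=(H_0^r(\Omega))^*$ restricts to one on $\mathbb{H}^r$ with a bounded operator norm, and the definitions of $\|\cdot\|_{\mathbb{H}^{-r}}$ via the dual pairing together with $\|(-\Delta)^{r/2}\varphi\|_{\mathbb{H}^{-r}}=\|\varphi\|_{\mathbb{H}^r}$ ensure the two dual norms are comparable. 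Finally, $\mathbb{H}^{-r}\hookrightarrow\mathbb{H}^{-r'}$ is the dual of the first embedding proved above.

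I do not anticipate a serious obstacle here; the subtlety worth watching is the endpoint $r=1/2$, where one must not confuse $\mathbb{H}^{1/2}$ with $H_0^{1/2}(\Omega)$ but instead route through the Lions–Magenes space $H_{00}^{1/2}(\Omega)$, and correspondingly handle the dual norm carefully. Because the statement only claims continuity of inclusions (not surjectivity), the strict inclusion $H_{00}^{1/2}\subsetneq H_0^{1/2}$ is harmless and does not require further comment.
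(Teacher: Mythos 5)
Your argument is exactly the route the paper intends: the authors give no proof beyond declaring the lemma ``a direct consequence of \eqref{hr}'', and your spectral estimate for $\mathbb{H}^{r'}\hookrightarrow\mathbb{H}^{r}$, the case-by-case reading of \eqref{hr} for $\mathbb{H}^{r}\hookrightarrow H^{r}(\Omega)$ (including the correct detour through $H_{00}^{1/2}(\Omega)$ at $r=1/2$), and the Gelfand-triple duality for the right half of the chain are the standard way to fill that in. For $0\le r<3/2$ everything you write goes through.

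There is, however, one step that does not survive the full range $0\le r\le 2$ claimed in the statement. Your proof of $H^{-r}(\Omega)\hookrightarrow\mathbb{H}^{-r}$ dualizes the inclusion $\mathbb{H}^{r}\hookrightarrow H_{0}^{r}(\Omega)$, but \eqref{hr} only gives $\mathbb{H}^{r}=H_{0}^{1}(\Omega)\cap H^{r}(\Omega)$ for $1<r\le 2$, and this space is \emph{not} contained in $H_{0}^{r}(\Omega)$ once $r>3/2$: the eigenfunctions $e_{n}$ belong to $\mathbb{H}^{r}$ for every $r$, yet they have nonvanishing normal derivative on $\partial\Omega$ and hence lie outside $H_{0}^{r}(\Omega)$ for $r>3/2$. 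A functional on $H_{0}^{r}(\Omega)$ therefore has no canonical restriction to $\mathbb{H}^{r}$, and the bound $\|f\|_{\mathbb{H}^{-r}}\le C\|f\|_{H^{-r}}$ for $f\in L^{2}(\Omega)$ cannot be extracted this way, since the supremum defining $\|f\|_{\mathbb{H}^{-r}}$ runs over a strictly larger class of test functions than the one defining $\|f\|_{H^{-r}}$. (Your step 2 avoids this trap by only claiming $\mathbb{H}^{r}\hookrightarrow H^{r}(\Omega)$; it is the duality step that silently upgrades the target to $H_{0}^{r}(\Omega)$.) This is a delicacy of the lemma itself at the upper end of its stated range rather than an idea you are missing, but your write-up should either restrict the duality argument to $r<3/2$ — which covers every place the lemma is actually invoked later in the paper — or supply a separate argument (not by restriction of functionals) for $3/2\le r\le 2$, where the identification of $H^{-r}(\Omega)$ with a subspace of $\mathbb{H}^{-r}$ is genuinely problematic.
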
 
The following lemma represents embeddings between fractional Sobolev spaces.
\begin{lemma}\cite[Theorem 8.12.6]{Bha}\label{sob_emb}
Let $1\le p,p'\le \infty, 0\le r,r' <\infty$ and $r'-\frac{d}{p'} \ge r-\frac{d}{p}$. Then,
$$W^{r',p'}(\Omega) \hookrightarrow W^{r,p}(\Omega).$$ 
\end{lemma}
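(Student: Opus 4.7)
The plan is to reduce the claim to the corresponding embedding on $\mathbb{R}^d$ and then to the Hardy--Littlewood--Sobolev inequality. Since $\Omega$ is bounded with smooth boundary, there exists a bounded linear extension operator $E: W^{r',p'}(\Omega)\to W^{r',p'}(\mathbb{R}^d)$ (Stein total extension), so it suffices to establish the whole-space embedding $W^{r',p'}(\mathbb{R}^d)\hookrightarrow W^{r,p}(\mathbb{R}^d)$ and then compose with $E$ and the bounded restriction to $\Omega$.

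On $\mathbb{R}^d$ I would identify $W^{s,q}$ (defined via the Gagliardo seminorm for non-integer $s$, and via iterated weak derivatives otherwise) with the Bessel potential space $H^{s,q}=(I-\Delta)^{-s/2}L^q(\mathbb{R}^d)$ for $1<q<\infty$; this equivalence is the standard Littlewood--Paley / Triebel--Lizorkin statement. Within the Bessel scale, the embedding $H^{r',p'}\hookrightarrow H^{r,p}$ is equivalent to the boundedness of the operator $(I-\Delta)^{(r-r')/2}: L^{p'}\to L^p$, which in turn (modulo a harmless lower-order correction) reduces to the Riesz potential estimate for $(-\Delta)^{-(r'-r)/2}$ when $r'\ge r$. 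This last bound is precisely the Hardy--Littlewood--Sobolev inequality, available when $1<p'\le p<\infty$ and $r'-d/p'=r-d/p$; the strict case $r'-d/p'>r-d/p$ is then obtained by inserting an intermediate embedding with equality, followed by a trivial Lebesgue inclusion that uses $|\Omega|<\infty$ after the restriction step.

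Borderline cases (one of $p,p'$ equal to $1$ or $\infty$, or the H\"older-range $r'-d/p'>0$) are treated separately by Morrey's inequality and the Gagliardo--Nirenberg inequality, both of which bypass HLS. Non-integer differences of smoothness are accommodated by splitting off integer derivatives $D^\eta$ with $|\eta|\le [r']$, applying the fractional estimate on each component, and reassembling with the Banach-space norm from the definition of $W^{r,p}$. The main obstacle I foresee is the passage between the Gagliardo description of $W^{s,q}$ used in the paper and the Bessel potential description required for the Fourier-analytic manipulations: for non-integer $s$ and $1<q<\infty$ these two definitions coincide only up to equivalence of norms, and establishing this equivalence demands the Littlewood--Paley decomposition. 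Once this identification is in place, the remainder of the proof is a routine assembly of HLS, derivative splitting, and the boundedness of $\Omega$.
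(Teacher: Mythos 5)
The paper does not actually prove this lemma: it is quoted, with attribution, from Bhattacharyya's book (Theorem 8.12.6), so there is no internal argument to compare yours against. Judged on its own terms, your sketch follows the standard Fourier-analytic route (extension, Bessel potentials, Hardy--Littlewood--Sobolev), but it contains one gap that is not a removable technicality.

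The gap is the identification $W^{s,q}(\mathbb R^d)\simeq H^{s,q}(\mathbb R^d)=(I-\Delta)^{-s/2}L^q$ for non-integer $s$. This is false for $q\neq 2$: for $0<s\notin\mathbb Z$ the Gagliardo-seminorm space $W^{s,q}$ coincides with the Besov space $B^s_{q,q}=F^s_{q,q}$, whereas the Bessel potential space is $H^{s,q}=F^s_{q,2}$, and these agree up to equivalent norms only when $q=2$; otherwise only one-sided inclusions hold ($B^s_{q,q}\hookrightarrow H^{s,q}$ for $q\le 2$ and the reverse for $q\ge 2$). So the obstacle you flag at the end --- which you propose to dispose of via Littlewood--Paley theory --- cannot be disposed of: the norm equivalence you need does not exist, and the chain through $(I-\Delta)^{(r-r')/2}$ and Hardy--Littlewood--Sobolev therefore does not directly deliver an embedding between Slobodeckij spaces. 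The standard repairs are either to run the whole argument in the Besov scale, using $B^{r'}_{p',p'}\hookrightarrow B^{r}_{p,p}$ for $r'\ge r$, $p'\le p$, $r'-d/p'\ge r-d/p$ (proved from Littlewood--Paley blocks and the Bernstein--Nikolskii inequalities, and sufficient for the lemma), or to estimate the Gagliardo double integral directly as in Di Nezza--Palatucci--Valdinoci for $0<r\le r'<1$, combined with your extension-operator reduction and the classical integer-order theory. Two smaller points: the endpoints $p,p'\in\{1,\infty\}$ admitted by the statement lie outside all of the $L^q$ multiplier machinery, so your deferral to Morrey and Gagliardo--Nirenberg has to carry real weight there; and the lemma as printed omits the hypothesis $r'\ge r$, without which it is false (take $r'=0$, $p'=\infty$, $r=1$, $p=1$) --- your argument implicitly, and correctly, assumes it.
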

Using Lemma \ref{lm:em}  and  \ref{sob_emb}, we have the following embeddings.
\begin{lemma}\label{lm:em1} We have
	\begin{enumerate}[a)]
		\item $L^{p}(\Omega) \hookrightarrow H^{r}(\Omega) \hookrightarrow \mathbb{H}^{r}$ if $\left\{-\frac{d}{2}<r \leq 0, p \geq \frac{2 d}{d-2 r}\right\}$.
		\item $\mathbb{H}^{r} \hookrightarrow H^{r}(\Omega) \hookrightarrow L^{p}(\Omega)$ if
		$
		\left\{0 \leq r<\frac{d}{2}, 1 \leq p \leq \frac{2 d}{d-2 r}\right\}.
		$
	\end{enumerate}	
\end{lemma}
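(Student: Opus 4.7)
My plan is to prove the two parts separately, with part (b) treated first by a direct appeal to the Sobolev embedding (Lemma \ref{sob_emb}) combined with the scale comparison (Lemma \ref{lm:em}), and then to obtain part (a) from part (b) by a duality argument; this is natural because for $r\le 0$ the space $H^{r}(\Omega)$ is \emph{defined} as the dual of $H_0^{-r}(\Omega)$, and $L^p(\Omega)$ is the dual of $L^{p'}(\Omega)$ with $p'=p/(p-1)$.

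For part (b), in the range $0\le r<d/2$, Lemma \ref{lm:em} immediately gives the first link $\mathbb H^{r}\hookrightarrow H^{r}(\Omega)$. For the second link $H^{r}(\Omega)\hookrightarrow L^{p}(\Omega)$, I would apply Lemma \ref{sob_emb} with $(r',p')=(r,2)$ and target exponents $(0,p)$: the hypothesis $r'-d/p'\ge r-d/p$ there becomes $r-d/2\ge -d/p$, which is equivalent to $p\le 2d/(d-2r)$, exactly the assumed range. Since $L^{p}(\Omega)=W^{0,p}(\Omega)$, this yields the claim.

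For part (a), in the range $-d/2<r\le 0$, Lemma \ref{lm:em} applied at the level $-r\ge 0$ already delivers $H^{r}(\Omega)\hookrightarrow \mathbb H^{r}$. The remaining embedding $L^{p}(\Omega)\hookrightarrow H^{r}(\Omega)$ is obtained by dualization. Setting $s=-r\in [0,d/2)$ and letting $p'=p/(p-1)$, the bound $p\ge 2d/(d-2r)=2d/(d+2s)$ translates into $p'\le 2d/(d-2s)$. By part (b) applied at the index $s$, we get $\mathbb H^{s}\hookrightarrow H^{s}(\Omega)\hookrightarrow L^{p'}(\Omega)$, and in particular $H_0^{s}(\Omega)\hookrightarrow L^{p'}(\Omega)$ continuously and densely (since $C_c^\infty(\Omega)$ is dense in both). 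Taking adjoints flips the inclusion to $(L^{p'}(\Omega))^{*}\hookrightarrow (H_0^{s}(\Omega))^{*}$, i.e.\ $L^{p}(\Omega)\hookrightarrow H^{-s}(\Omega)=H^{r}(\Omega)$, as required.

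The only delicate point I foresee is the duality step: it requires $p'\in[1,\infty)$ so that the identification $(L^{p'})^{*}=L^{p}$ is valid, and it requires density of $H_0^{s}(\Omega)$ in $L^{p'}(\Omega)$. The condition $p'<\infty$ corresponds to $d+2r>0$, which is precisely the strict inequality $r>-d/2$ in the hypothesis; the condition $p'\ge 1$ follows from $r\le 0$. Density is standard via $C_c^{\infty}(\Omega)$. Beyond this, the arithmetic with Sobolev conjugates in part (b) is routine, and the rest reduces to bookkeeping against Lemmas \ref{lm:em} and \ref{sob_emb}.
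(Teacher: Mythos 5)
Your argument is correct and follows the paper's intended route: the paper gives no written proof beyond the attribution to Lemmas \ref{lm:em} and \ref{sob_emb}, and your derivation of part (b) from Lemma \ref{sob_emb} with exponents $(r,2)\to(0,p)$, followed by the dense-embedding duality $H_0^{s}(\Omega)\hookrightarrow L^{p'}(\Omega)\Rightarrow L^{p}(\Omega)\hookrightarrow H^{-s}(\Omega)$ for part (a), is exactly the standard way to fill in that one-line proof (duality being forced anyway, since Lemma \ref{sob_emb} only covers nonnegative smoothness indices). Your bookkeeping of the exponent conditions, including why $r>-d/2$ guarantees $p'<\infty$ and hence the density needed for injectivity, is accurate.
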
 
\subsection{Representation of solutions to linear problem}
In order to investigate problem \eqref{e1}-\eqref{e3}, we assume the following hypothesis:
\begin{itemize}
\it
\item[(\textbf{M})] The function $m\in L^1_{loc}(\mathbb R^+)$ is nonnegative such that  $a(t): = 1 + m(t)$ is completely positive.
\end{itemize}
Recall that the real valued function  $a$ is said to be completely positive if the solutions to the following integral equations
\begin{align}
& s(t) + \theta \int_0^t a(t-\tau)s(\tau)d\tau = 1,\; t\ge 0,\label{ire1}\\
& r(t) + \theta \int_0^t a(t-\tau)r(\tau)d\tau = a(t),\; t>0,\label{ire2}
\end{align}
are nonnegative for each $\theta>0$. The theory of completely positive functions can be found in  \cite{CN81,Pruss}. An equivalent condition for $a$ to be completely positive is as follows:
\begin{enumerate}
\item[(PC)] \it There exists a nonincreasing and nonnegative function  $k\in L^1_{loc}(\mathbb R^+)$ and $\epsilon\ge 0$ such that
$$
\epsilon a + k*a = 1\text{ on } (0,\infty).
$$
\end{enumerate}
In the present work, we assume that  $m$ is unbounded on $\mathbb R^+$, which implies  $\epsilon=0$. The condition (PC) is satisfied if $m$ is completely monotone, i.e. $(-1)^n m^{(n)} (t) \ge 0$ on $(0,\infty)$, for all $n\in\mathbb N$ (see \cite{Miller}). 

First we consider the relaxation problem:
\begin{align}
\omega'(t) + \lambda (1+D_t^{\{m\}}) \omega (t) & = 0,\; t> 0,\label{re1}\\
\omega (0) & = 1,\label{re2}
\end{align}
where the unknown $\omega$ is a scalar function, $\lambda$ is a positive parameter. Integrating both sides of \eqref{re1} over $(0,t)$, we obtain
\begin{align}
\omega(t) + \lambda \int_0^t (1+m(t-\tau))\omega(\tau) d\tau=1, \label{re3}
\end{align}
which is just \eqref{ire1} with $\lambda =\theta$.

We denote by $\omega(t,\lambda)$ the solution of \eqref{re3} to emphasize the dependence of  $\omega$ on the parameter $\lambda$. We list some properties of  $\omega$ in the following proposition. 
\begin{proposition}\label{pp-relax-func}
Let $\omega$ be the solution to \eqref{re1}-\eqref{re2}. Then
\begin{enumerate}
\item[(a)] $\omega$ is nonincreasing on  $\mathbb R^+$ and
\begin{align*}
0<\omega(t,\lambda)\le \frac{1}{1+\lambda\int_0^t (1+m(\tau))d\tau},\; \forall t\ge 0,\;\lambda>0.
\end{align*}
\item[(b)] The following estimate holds
\begin{align*}
\int_0^t \omega(\tau,\lambda)d\tau\le \lambda^{-1}(1-\omega(t,\lambda)),\;\forall t\ge 0, \lambda>0.
\end{align*}
\item[(c)] For each $t>0$, the function $\lambda\mapsto \omega(t,\lambda)$ is nonincreasing.
\item[(d)] The function $v(t) = \omega(t,\lambda)v_0 + \int_0^t\omega(t-\tau,\lambda)g(\tau)d\tau$ is a solution to the problem
\begin{align*}
v'(t) + \lambda (1+D^{\{m\}}_t)v(t) &= g(t),\\
v(0)&=v_0.
\end{align*}
\end{enumerate}

\end{proposition}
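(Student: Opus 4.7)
All four assertions rest on the Volterra form \eqref{re3}, which is precisely \eqref{ire1} with $\theta=\lambda$, and on the complete positivity of $a=1+m$ from hypothesis (M). I will work throughout with the companion resolvent $r(\cdot,\lambda)$ satisfying \eqref{ire2}, which is likewise nonnegative, and exploit the convolution algebra on $L^1_{\mathrm{loc}}(\mathbb R^+)$, passing to Laplace transforms when convenient.

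\textbf{Parts (a) and (b).} Nonnegativity of $\omega(\cdot,\lambda)$ is the very definition of complete positivity applied to \eqref{re3}. To obtain monotonicity in $t$, I derive the identity
\[
\omega(t,\lambda) = 1 - \lambda\int_0^t r(\tau,\lambda)\,d\tau
\]
by convolving \eqref{ire2} with $\omega(\cdot,\lambda)$ and simplifying via \eqref{ire1}; differentiation gives $\omega'(t,\lambda)=-\lambda r(t,\lambda)\le 0$. Substituting $\omega(\tau,\lambda)\ge \omega(t,\lambda)$ (for $\tau\in[0,t]$) into the convolution in \eqref{re3} and solving for $\omega(t,\lambda)$ yields the upper bound. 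Strict positivity then follows by convolving \eqref{re3} with the kernel $k$ from (PC) and arguing by contradiction, using $\epsilon=0$ together with the monotonicity just established. Part (b) reduces to the pointwise estimate $a(t)\ge 1$: \eqref{re3} gives $\lambda\int_0^t\omega(\tau,\lambda)d\tau\le \lambda\int_0^t a(t-\tau)\omega(\tau,\lambda)d\tau = 1-\omega(t,\lambda)$.

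\textbf{Part (c).} Fix $\lambda_1<\lambda_2$ and set $\psi:=\omega(\cdot,\lambda_1)-\omega(\cdot,\lambda_2)$. Subtracting the two instances of \eqref{re3} produces the Volterra equation
\[
\psi(t)+\lambda_1(a*\psi)(t)=g(t),\qquad g(t):=(\lambda_2-\lambda_1)(a*\omega(\cdot,\lambda_2))(t).
\]
From \eqref{re3} applied to $\omega(\cdot,\lambda_2)$ one reads $g(t)=(\lambda_2-\lambda_1)\lambda_2^{-1}(1-\omega(t,\lambda_2))$, so $g(0)=0$ and $g'(t)=-(\lambda_2-\lambda_1)\lambda_2^{-1}\omega'(t,\lambda_2)\ge 0$ by part (a). Since $\omega(\cdot,\lambda_1)$ is the resolvent of $I+\lambda_1(a*\,\cdot)$, comparing Laplace transforms yields the representation $\psi=\omega(\cdot,\lambda_1)*g'$, which is nonnegative. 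I expect this to be the main obstacle, as it requires the convolution-differentiation identity $(\omega(\cdot,\lambda_1)*g)' = \omega(\cdot,\lambda_1)*g'$ (valid because $g(0)=0$) and a careful return from $\hat\psi = \hat g/(1+\lambda_1\hat a)$ to the time domain without spurious boundary terms.

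\textbf{Part (d).} A direct verification: integrating the inhomogeneous ODE from $0$ to $t$ and using $\int_0^t D_\tau^{\{m\}}v(\tau)d\tau=\int_0^t m(t-s)v(s)ds$ reduces the Cauchy problem to the Volterra equation $v(t)+\lambda(a*v)(t)=v_0+(1*g)(t)$. Substituting the candidate $v=\omega(\cdot,\lambda)v_0+\omega(\cdot,\lambda)*g$ into the left-hand side and applying \eqref{re3} termwise, namely $\omega(\cdot,\lambda)+\lambda(a*\omega(\cdot,\lambda))\equiv 1$, collapses both summands and recovers $v_0+(1*g)(t)$.
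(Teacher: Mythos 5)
Your proposal is correct, and it is considerably more self-contained than what the paper actually writes: the paper disposes of (a) and (b) with the single sentence that they are implied from \eqref{re3}, and refers (c) and (d) entirely to the earlier work \cite{KT2022}. Your arguments for (a), (b) and (d) are the natural ones and match that intent: nonnegativity is the definition of complete positivity applied to \eqref{re3} with $\theta=\lambda$; the identity $\omega(\cdot,\lambda)=1-\lambda\,(1*r(\cdot,\lambda))$ (hence $\omega'=-\lambda r\le 0$) together with $a=1+m\ge 1$ gives the monotonicity, the upper bound in (a), and the estimate in (b); and (d) reduces, after integrating the equation, to the convolution computation $(\omega+\lambda a*\omega)*g=1*g$. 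Your contradiction argument for strict positivity also closes: if $t_0$ were the first zero, monotonicity forces $\omega\equiv 0$ on $[t_0,\infty)$, and the convolved identity $k*\omega+\lambda(1*\omega)=1*k$ compares a left-hand side that is nonincreasing in $t$ for $t>t_0$ with a nondecreasing right-hand side, which forces $k\equiv 0$ and contradicts $k*a=1$. For (c), your route via $\psi+\lambda_1 a*\psi=g$ with $g=(\lambda_2-\lambda_1)\lambda_2^{-1}(1-\omega(\cdot,\lambda_2))$ is a clean alternative to the citation; moreover, the Laplace-transform step you flag as the main obstacle can be avoided entirely. Since $g=1*g'$ with $g'=(\lambda_2-\lambda_1)\,r(\cdot,\lambda_2)\in L^1_{loc}$ nonnegative, convolving the identity $\omega(\cdot,\lambda_1)+\lambda_1 a*\omega(\cdot,\lambda_1)=1$ with $g'$ shows directly that $\omega(\cdot,\lambda_1)*g'$ solves the $\psi$-equation, and uniqueness for linear Volterra equations with $L^1_{loc}$ kernels then yields $\psi=\omega(\cdot,\lambda_1)*g'\ge 0$; this sidesteps any question of Laplace transformability of $a\in L^1_{loc}(\mathbb R^+)$ and the boundary-term bookkeeping you were worried about.
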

\begin{proof}
The properties (a) and (b) are implied from \eqref{re3}. The properties (c) and (d) were proved in \cite{KT2022}.
\end{proof}

Now we look for a representation of the solution to the following initial value  linear problem
\begin{align}
\partial_t u - (1 + D_t^{\{m\}})\Delta u & = F \;\text{ in }\Omega, t\in (0,T],\label{le1}\\
 u & = 0\; \text{ on } \partial\Omega,\; t\in [0,T],\label{le2}\\
u(0) & = \xi \; \text{ in }\Omega, \label{le3}
\end{align}
where $F\in C([0,T];L^2(\Omega))$.

Assume that 
\begin{align*}
u(t) = \sum_{n=1}^\infty u_n(t)e_n, \; F(t) = \sum_{n=1}^\infty F_n(t)e_n.
\end{align*}
Using these expansions in \eqref{le1}, we obtain
\begin{align*}
&u_n'(t) +\lambda_n (1 + D_t^{\{m\}})u_n(t) = F_n(t),\\
&u_n(0) = \xi_n := (\xi,e_n).
\end{align*}
Applying Proposition \ref{pp-relax-func}(d), we get
$$
u_n(t) = \omega(t,\lambda_n)\xi_n + \int_0^t \omega(t-\tau,\lambda_n)F_n(\tau)d\tau.
$$
Therefore, 
\begin{align}
u(t) = S(t)\xi + \int_0^t S(t-\tau)F(\tau)d\tau,\label{sol-form}
\end{align}
where $S(t)$ is the \textit{resolvent operator} determined by
\begin{align}
S(t)\xi = \sum_{n=1}^\infty \omega(t,\lambda_n)\xi_n e_n ,\; \xi\in L^2(\Omega).\label{sol-op}
\end{align}
Obviously, $S(t)$ is a bounded linear operator on  $L^2(\Omega)$ for all $t\ge 0$. Moreover, we have the following statements.
\begin{lemma}\label{lm-sol-op}
Let $\{S(t)\}_{t\ge 0}$ be the resolvent family defined by \eqref{sol-op}, $v\in L^2(\Omega)$ and $T>0$. Then,
\begin{enumerate}
\item[(a)] $S(\cdot)v\in C([0,T];L^2(\Omega))$ and $\|S(t)\|\le \omega(t,\lambda_1)$ for all $t\ge 0$.
\item[(b)] For $g\in C([0,T];\mathbb H^{\mu-1})$, $\mu>0$, we have $S*g\in C([0,T];\mathbb H^{\mu})$. Furthermore,
\begin{equation}\label{lm-sol-op-a}
\|S*g(t)\|^2_{\mathbb H^\mu}\le \int_0^t \omega(t-\tau,\lambda_1)\|g(\tau)\|^2_{\mathbb H^{\mu-1}}d\tau,\text{ for all } t\ge 0.
\end{equation}
\item[(c)] If $m$ is nonincreasing, then $S(\cdot)v\in C^1((0,T];L^2(\Omega))$ and it holds that
\begin{equation}\label{lm-sol-op-b}
\|S'(t)\|\le t^{-1}\;\text{ for all } t>0.
\end{equation}
\item[(d)] For $\delta\in (0,1)$, $g\in C([0,T];\mathbb H^{\mu-1-\delta})$, we have
\begin{align*}
\|S*g(t)\|^2_{\mathbb H^\mu}\le \int_0^t (t-\tau)^{-\delta}\|g(\tau)\|^2_{\mathbb H^{\mu-1-\delta}}d\tau.
\end{align*}
\item[(e)] If  $(1*m)^{-1}\in L^1(0,T)$, then for  $g\in C([0,T];\mathbb H^{\mu-2})$, we have
\begin{align*}
\|S*g(t)\|^2_{\mathbb H^\mu}\le \int_0^t \frac{\|g(\tau)\|^2_{\mathbb H^{\mu-2}}}{(1*m)(t-\tau)}d\tau.
\end{align*}
\end{enumerate}
\end{lemma}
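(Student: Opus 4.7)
The overall plan is to diagonalise in the eigenbasis $\{e_n\}$ and reduce every claim to a scalar estimate on the relaxation function $\omega(\cdot,\lambda_n)$ supplied by Proposition~\ref{pp-relax-func}, then reassemble the pieces by summation in the relevant Hilbert scale. The whole proof thus really happens on the scalar side, and most of the items fall out of one and the same Cauchy--Schwarz trick combined with the monotonicity statements on $\omega$.

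For (a) I would just read off from Proposition~\ref{pp-relax-func}(c) that $\omega(t,\lambda_n)\le\omega(t,\lambda_1)$, so
$$\|S(t)v\|_{L^{2}}^{2}=\sum_{n=1}^{\infty}\omega(t,\lambda_n)^{2}v_n^{2}\le\omega(t,\lambda_1)^{2}\|v\|_{L^{2}}^{2},$$
and deduce continuity of $t\mapsto S(t)v$ from continuity of each $\omega(\cdot,\lambda_n)$ (visible in \eqref{re3}) combined with dominated convergence in $\ell^{2}$ using the uniform majorant $1$.

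For the three convolution bounds (b), (d), (e) my plan is to apply the same Cauchy--Schwarz split to the $n$-th Fourier coefficient $u_n=(S*g)_n$:
$$u_n(t)^{2}\le\Bigl(\int_{0}^{t}\omega(t-\tau,\lambda_n)\,d\tau\Bigr)\int_{0}^{t}\omega(t-\tau,\lambda_n)g_n(\tau)^{2}\,d\tau\le\lambda_n^{-1}\!\int_{0}^{t}\omega(t-\tau,\lambda_n)g_n(\tau)^{2}\,d\tau,$$
where the last step uses Proposition~\ref{pp-relax-func}(b). The factor $\lambda_n^{-1}$ is exactly what trades one power of $\lambda_n$ when passing from $\mathbb{H}^{\mu-1}$ to $\mathbb{H}^{\mu}$. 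For (b) I would then bound the remaining $\omega(t-\tau,\lambda_n)\le\omega(t-\tau,\lambda_1)$ via Proposition~\ref{pp-relax-func}(c) and sum in $n$. For (d) I would first interpolate the two pointwise bounds $\omega\le 1$ and $\omega(t,\lambda)\le(\lambda t)^{-1}$ (the latter from Proposition~\ref{pp-relax-func}(a)) to obtain $\omega(t,\lambda)\le(\lambda t)^{-\delta}$, which absorbs a further $\lambda_n^{-\delta}$ at the cost of a time singularity $(t-\tau)^{-\delta}$. For (e) I would instead use the sharper $\omega(t,\lambda)\le(\lambda(1*m)(t))^{-1}$ from Proposition~\ref{pp-relax-func}(a), saving a full extra $\lambda_n^{-1}$ in exchange for the weight $(1*m)^{-1}$, which is precisely why the hypothesis $(1*m)^{-1}\in L^{1}(0,T)$ is imposed. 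Continuity of $S*g$ in the target Hilbert scale then follows by a routine approximation and dominated-convergence argument on top of these estimates.

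The delicate item is (c). Differentiating \eqref{re3} in $t$ formally shows that $\varphi_\lambda:=-\omega'(\cdot,\lambda)/\lambda$ satisfies the resolvent equation \eqref{ire2} with $\theta=\lambda$, so the complete-positivity assumption (M) both legitimises the $C^{1}$ regularity on $(0,T]$ and forces $\varphi_\lambda\ge 0$. The extra input needed when $m$ is nonincreasing is that $\varphi_\lambda$ is itself nonincreasing; this is the step I expect to be the main obstacle, and I would lean on the monotonicity analysis of resolvent kernels carried out in \cite{KT2022} to obtain it. Once that is granted, the identity
$$\int_{0}^{t}\bigl(-\omega'(s,\lambda)\bigr)ds=1-\omega(t,\lambda)\le 1,$$
combined with monotonicity of $-\omega'(\cdot,\lambda)$, immediately forces $t\,|\omega'(t,\lambda)|\le 1$ uniformly in $\lambda$, and the operator bound $\|S'(t)\|\le t^{-1}$ drops out by summing in $n$.
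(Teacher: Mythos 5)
Your proof follows the paper's argument essentially verbatim: the same Cauchy--Schwarz split on each Fourier coefficient, the factor $\lambda_n^{-1}$ from Proposition~\ref{pp-relax-func}(b), and the three pointwise bounds on $\omega$ (monotonicity in $\lambda$ for (b), $1+\lambda t\ge(\lambda t)^{\delta}$ for (d), and $1+\lambda(1*m)(t)\ge\lambda(1*m)(t)$ for (e)). For (a) and (c) the paper simply cites \cite{KT2022}; your sketches, including deferring the monotonicity of $-\omega'(\cdot,\lambda)$ to that reference, are consistent with what is done there.
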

\begin{proof}
The properties (a) and (c) were proved in \cite[Lemma 1]{KT2022}. Now we show (b). For $g\in C([0,T];\mathbb H^{\mu-1})$, we have
\begin{align*}
\|S*g(t)\|^2_{\mathbb H^\mu} = \sum_{n=1}^\infty \lambda^\mu_n \left(\int_0^t \omega(t-\tau,\lambda_n)g_n(\tau)d\tau\right)^2,\; g_n(\tau)=(g(\tau),e_n).
\end{align*}
Moreover, by the H\"older inequality, we can estimate
\begin{align*}
\left(\int_0^t \omega(t-\tau,\lambda_n)g_n(\tau)d\tau\right)^2 & \le \left(\int_0^t \omega(t-\tau,\lambda_n)d\tau\right)\left(\int_0^t \omega(t-\tau,\lambda_n)|g_n(\tau)|^2d\tau\right)\\
& \le \lambda_n^{-1}\int_0^t \omega(t-\tau,\lambda_1)|g_n(\tau)|^2d\tau.
\end{align*}
Hence,
\begin{align*}
\|S*g(t)\|^2_{\mathbb H^\mu} & \le \sum_{n=1}^\infty \int_0^t \omega(t-\tau,\lambda_1)\lambda_n^{\mu-1}|g_n(\tau)|^2 d\tau\\
& = \int_0^t \omega(t-\tau,\lambda_1)\|g(\tau)\|^2_{\mathbb H^{\mu-1}} d\tau.
\end{align*}
Next, we prove (d). Assume that $g\in C([0,T];\mathbb H^{\mu-1-\delta})$. Then
\begin{align*}
\|S*g(t)\|^2_{\mathbb H^\mu}=\sum_{n=1}^\infty \lambda_n^\mu \left(\int_0^t \omega(t-\tau,\lambda_n)g_n(\tau)d\tau\right)^2,\; g_n(\tau)=(g(\tau),e_n).
\end{align*}
Using the H\"older inequality and Proposition \ref{pp-relax-func}, we obtain
\begin{align*}
 \left(\int_0^t \omega(t-\tau,\lambda_n)g_n(\tau)d\tau\right)^2 & \le \left(\int_0^t \omega(t-\tau,\lambda_n)d\tau\right)\left(\int_0^t \omega(t-\tau,\lambda_n)|g_n(\tau)|^2 d\tau\right)\\
 & \le \lambda_n^{-1}\int_0^t \frac{|g_n(\tau)|^2}{1+\lambda_n (t-\tau)} d\tau\\
 & \le \lambda_n^{-1}\int_0^t \frac{|g_n(\tau)|^2}{\lambda_n^\delta (t-\tau)^\delta} d\tau,
\end{align*}
here, we used the inequality $1+b\ge b^\delta$ for $b\ge 0, \delta\in (0,1)$.

Therefore,
\begin{align*}
\|S*g(t)\|^2_{\mathbb H^\mu}&\le \sum_{n=1}^\infty \lambda_n^{\mu-1-\delta}\int_0^t (t-\tau)^{-\delta}|g_n(\tau)|^2 d\tau\\
& = \int_0^t (t-\tau)^{-\delta}\|g(\tau)\|^2_{\mathbb H^{\mu-1-\delta}}d\tau.
\end{align*}
The last property is proved similarly by noting that
\begin{align*}
 \left(\int_0^t \omega(t-\tau,\lambda_n)g_n(\tau)d\tau\right)^2 & \le \left(\int_0^t \omega(t-\tau,\lambda_n)d\tau\right)\left(\int_0^t \omega(t-\tau,\lambda_n)|g_n(\tau)|^2 d\tau\right)\\
 & \le \lambda_n^{-1}\int_0^t \frac{|g_n(\tau)|^2}{1+\lambda_n (1*m)(t-\tau)} d\tau\\
 & \le \lambda_n^{-2}\int_0^t \frac{|g_n(\tau)|^2}{ (1*m)(t-\tau)} d\tau.
\end{align*}
The proof is complete.
\end{proof}

\section{Global solvability and  H\"older regularity}

In order to solve problem \eqref{e1}-\eqref{e3}, we require the following assumption on the nonlinearity:
\begin{enumerate}\it
\item[(F)] The function $f: \mathbb H^{\mu}\times \mathbb H^{\mu} \to \mathbb H^{-\theta}$ satisfies  $f(0,0)=0$ and for $\rho, \rho'>0$,  we have
$$
\|f(v_1,w_1)-f(v_2,w_2)\|_{\mathbb H^{-\theta}}\le L_f(\rho)\|v_1-v_2\|_{\mathbb H^\mu}+K_f(\rho')\|w_1-w_2\|_{\mathbb H^\mu}, 
$$
where  $\|v_1\|_{\mathbb H^\mu}, \|v_2\|_{\mathbb H^\mu}\le \rho$, $\|w_1\|_{\mathbb H^\mu}, \|w_2\|_{\mathbb H^\mu}\le \rho'$, $0<\mu<2$, $\theta >0$ , $L_f$ and $K_f$ are nonnegative functions.
\end{enumerate}
By the representation of the solution  to the linear problem given by \eqref{sol-form}, we have the following definition of mild solution to \eqref{e1}-\eqref{e3}.
\begin{definition}
Let $\xi\in\mathbb H^\mu$. The function $u\in C([0,T];\mathbb H^\mu)$ is called a mild solution to  \eqref{e1}-\eqref{e3} on the interval $[0,T]$ if the following  identity holds
$$
u(t) = S(t)\xi + \int_0^t S(t-\tau)f(u(\tau),\mathcal Hu(\tau))d\tau,\; t\in [0,T].
$$
\end{definition}
\subsection{Global solvability}
\begin{theorem}\label{th-sol}
Assume that the condtion (F) is satisfied with $\theta=1+\delta-\mu$, $\delta\in(0,1)$ and 
$$
\limsup\limits_{\rho\to 0}L_f(\rho)=L_f^*,\; \limsup\limits_{\rho'\to 0}K_f(\rho')=K_f^*,
$$
such that
$$
8T^{1-\delta}(1-\delta)^{-1}({L^*_f}^2+{K^*_f}^2\|\ell\|^2_{L^1})<1.
$$
Then there exists $\rho^*>0$ such that for $\|\xi\|_{\mathbb H^\mu}\le \frac 12 \rho^*$, problem \eqref{e1}-\eqref{e3} possesses a unique mild solution $u$ on $[0,T]$ obeying $\|u(t)\|_{\mathbb H^\mu}\le \rho^*$, for $t\in [0,T]$.
\end{theorem}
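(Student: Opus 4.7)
The plan is a standard Banach contraction argument on the closed ball
\[
B_{\rho^*} := \{u \in C([0,T];\mathbb{H}^\mu) : \|u\|_{C([0,T];\mathbb{H}^\mu)} \le \rho^*\},
\]
for the solution operator
\[
\Phi(u)(t) := S(t)\xi + \int_0^t S(t-\tau)f(u(\tau),\mathcal H u(\tau))\,d\tau.
\]
The decisive observation is that $\theta = 1+\delta-\mu$ gives $\mathbb{H}^{-\theta} = \mathbb{H}^{\mu-1-\delta}$, so Lemma \ref{lm-sol-op}(d) directly controls the convolution $(S*f(u,\mathcal Hu))(t)$ in the norm $\|\cdot\|_{\mathbb{H}^\mu}$ that the space $C([0,T];\mathbb{H}^\mu)$ demands.

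First I would pin down $\rho^*$ from the $\limsup$ hypothesis. Using continuity of the polynomial $x,y \mapsto x^2+y^2\|\ell\|^2_{L^1}$ and the strict inequality, I can find $\epsilon > 0$ so that
\[
8T^{1-\delta}(1-\delta)^{-1}\bigl((L_f^*+\epsilon)^2 + (K_f^*+\epsilon)^2\|\ell\|^2_{L^1}\bigr) < 1,
\]
then choose $\rho^* > 0$ so small that $L_f(\rho) \le L_f^* + \epsilon$ for $\rho \le \rho^*$ and $K_f(\rho') \le K_f^* + \epsilon$ for $\rho' \le \|\ell\|_{L^1}\rho^*$. Together with the Young-type bound $\|\mathcal Hu(\tau)\|_{\mathbb{H}^\mu} \le \|\ell\|_{L^1}\|u\|_{C([0,T];\mathbb{H}^\mu)}$ (obtained componentwise in the spectral expansion) and $f(0,0)=0$, condition (F) furnishes
\[
\|f(u,\mathcal Hu)(\tau)\|^2_{\mathbb{H}^{-\theta}} \le 2\bigl(L_f^2(\rho^*) + K_f^2(\|\ell\|_{L^1}\rho^*)\|\ell\|^2_{L^1}\bigr)\|u\|^2_{C([0,T];\mathbb{H}^\mu)}
\]
for $u \in B_{\rho^*}$, and an identical Lipschitz-type inequality for the difference $f(u,\mathcal Hu) - f(v,\mathcal Hv)$.

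For the self-map property, the trivial bound $\|S(t)\xi\|_{\mathbb{H}^\mu} \le \|\xi\|_{\mathbb{H}^\mu}$ (from $\omega(t,\lambda_n) \le 1$ in Proposition \ref{pp-relax-func}(a), applied termwise in the spectral expansion) and Lemma \ref{lm-sol-op}(d) combine to give
\[
\|\Phi(u)(t)\|^2_{\mathbb{H}^\mu} \le 2\|\xi\|^2_{\mathbb{H}^\mu} + 4\bigl(L_f^2(\rho^*) + K_f^2(\|\ell\|_{L^1}\rho^*)\|\ell\|^2_{L^1}\bigr)(\rho^*)^2 \cdot \frac{T^{1-\delta}}{1-\delta}.
\]
With $\|\xi\|_{\mathbb{H}^\mu} \le \rho^*/2$ the first term is at most $(\rho^*)^2/2$, and the chosen $\epsilon$ forces the second to be at most $(\rho^*)^2/2$, so $\Phi(B_{\rho^*}) \subset B_{\rho^*}$. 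Replaying the same estimate for $\Phi(u) - \Phi(v)$ (where the initial condition disappears) yields the contraction constant $2T^{1-\delta}(1-\delta)^{-1}(L_f^2 + K_f^2\|\ell\|^2_{L^1})$, which is a fortiori below $1$ by the same hypothesis. The Banach fixed point theorem then produces the unique mild solution.

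The main non-mechanical point is verifying that $\Phi(u)$ actually belongs to $C([0,T];\mathbb{H}^\mu)$, since $f(u,\mathcal Hu) \in C([0,T];\mathbb{H}^{\mu-1-\delta})$ falls short of the regularity required by Lemma \ref{lm-sol-op}(b). I would establish this continuity directly by dominated convergence: the integrable singularity $(t-\tau)^{-\delta}$ from part (d), the uniform spectral bound $\omega(\cdot,\lambda_n)\le 1$, and the pointwise continuity $t \mapsto \omega(t,\lambda_n)$ together control the tails of the spectral series and supply continuity of $t \mapsto (S*f(u,\mathcal Hu))(t)$ in $\mathbb{H}^\mu$; the same device handles $S(\cdot)\xi$. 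Once this regularity check is in place, the argument above is bookkeeping and concludes the proof.
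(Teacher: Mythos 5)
Your proposal is correct and follows essentially the same route as the paper: a Banach contraction argument on the ball $B_{\rho^*}$ in $C([0,T];\mathbb H^\mu)$, using Lemma \ref{lm-sol-op}(d) via the identification $\mathbb H^{-\theta}=\mathbb H^{\mu-1-\delta}$, the bound $\|\mathcal Hu\|_{\mathbb H^\mu}\le\|\ell\|_{L^1}\|u\|_\infty$, and an $\epsilon$-perturbation of the $\limsup$ hypothesis to fix $\rho^*$, yielding the same self-map and contraction constants. Your extra remark on verifying $S*f(u,\mathcal Hu)\in C([0,T];\mathbb H^\mu)$ is a point the paper leaves implicit, and your dominated-convergence argument for it is sound.
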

\begin{proof}
We will show that the operator $\Phi$ determined by
$$
\Phi(u)(t)=S(t)\xi + \int_0^t S(t-\tau)f(u(\tau),\mathcal Hu(\tau))d\tau,\; t\in [0,T],
$$
has a fixed point in the space $C([0,T];\mathbb H^\mu)$ furnished by the norm $\|u\|_\infty=\sup\limits_{t\in [0,T]}\|u(t)\|_{\mathbb H^\mu}$. 

Denote by $B_\rho$ the closed ball of radius $\rho$ in $C([0,T];\mathbb H^\mu)$ which centers at the origin.  For $u\in B_\rho$, we have $\mathcal Hu\in B_{\rho'}$ with $\rho' = \rho\|\ell\|_{L^1}$. Then
\begin{align*}
\|\Phi(u)(t)\|^2_{\mathbb H^\mu} & \le 2\|S(t)\xi\|^2_{\mathbb H^\mu} + 2\|S*f(u(\cdot),\mathcal Hu(\cdot))(t)\|^2_{\mathbb H^\mu}\\
& \le 2\|S(t)\xi\|^2_{\mathbb H^\mu} + 2\int_0^t (t-\tau)^{-\delta}\|f(u(\tau),\mathcal Hu(\tau))\|^2_{\mathbb H^{\mu-1-\delta}}d\tau,
\end{align*} 
by Lemma  \ref{lm-sol-op}(d). Using the condition (F), we have
\begin{align*}
\|\Phi(u)(t)\|^2_{\mathbb H^\mu} & \le 2\|\xi\|^2_{\mathbb H^\mu} + 4\int_0^t (t-\tau)^{-\delta}[L_f(\rho)^2 \|u(\tau)\|^2_{\mathbb H^\mu} +K_f(\rho')^2\|\mathcal Hu(\tau)\|^2_{\mathbb H^\mu}]d\tau\\
& \le 2\|\xi\|^2_{\mathbb H^\mu} + 4\int_0^t (t-\tau)^{-\delta}({L^*_f}^2+{K^*_f}^2\|\ell\|^2_{L^1}+\epsilon) \rho^2 d\tau\\
& \le 2\|\xi\|^2_{\mathbb H^\mu} + 4T^{1-\delta}(1-\delta)^{-1}({L^*_f}^2+{K^*_f}^2\|\ell\|^2_{L^1}+\epsilon) \rho^2,
\end{align*}
where $\epsilon>0$ and $\rho^*>0$ is chosen such that 
$$
8T^{1-\delta}(1-\delta)^{-1}({L^*_f}^2+{K^*_f}^2\|\ell\|^2_{L^1}+\epsilon)\le 1, \text{ for all } \rho\le \rho^*.
$$
Then, for $u\in B_{\rho^*}$ and $\|\xi\|\le \frac 12\rho^*$,  we can estimate
\begin{align*}
\|\Phi(u)(t)\|_{\mathbb H^\mu}\le \rho^*, \text{ for all } t\in [0,T].
\end{align*}
Hence $\Phi(B_{\rho^*})\subset B_{\rho^*}$. It remains to verify that $\Phi$ is a contraction on $B_{\rho^*}$. Indeed, for $u_1, u_2\in B_{\rho^*}$, we have
\begin{align*}
& \|f(u_1(\tau),\mathcal Hu_1(\tau))-f(u_2(\tau),\mathcal Hu_2(\tau))\|^2_{\mathbb H^{\mu-1-\delta}} \\
& \qquad \le 2 L_f(\rho^*)^2 \|u_1(\tau)-u_2(\tau)\|^2_{\mathbb H^\mu} + 2 K_f(\rho^*\|h\|_{L^1})^2\|\mathcal H u_1(\tau)-\mathcal Hu_2(\tau)\|^2_{\mathbb H^\mu}\\
& \qquad \le 2({L^*_f}^2+{K^*_f}^2\|h\|^2_{L^1}+\epsilon)\sup_{\tau\in [0,T]}\|u_1(\tau)-u_2(\tau)\|^2_{\mathbb H^\mu}.
\end{align*}
This yields that
\begin{align*}
& \|\Phi(u_1)(t)-\Phi(u_2)(t)\|^2_{\mathbb H^\mu} \le \|S*[f(u_1(\cdot),\mathcal Hu_1(\cdot))-f(u_2(\cdot),\mathcal Hu_2(\cdot))](t)\|^2_{\mathbb H^\mu}\\
&\qquad  \le \int_0^t (t-\tau)^{-\delta}\|f(u_1(\tau),\mathcal Hu_1(\tau))-f(u_2(\tau),\mathcal Hu_2(\tau))\|^2_{\mathbb H^{\mu-1-\delta}}d\tau\\
&\qquad  \le 2T^{1-\delta}(1-\delta)^{-1}({L^*_f}^2+{K^*_f}^2\|h\|^2_{L^1}+\epsilon) \sup_{\tau\in [0,T]} \|u_1(\tau)-u_2(\tau)\|^2_{\mathbb H^\mu}\\
&\qquad  \le \frac 14 \sup_{\tau\in [0,T]} \|u_1(\tau)-u_2(\tau)\|^2_{\mathbb H^\mu}.
\end{align*}
Finally, we arrive at
$$
\|\Phi(u_1)-\Phi(u_2)\|_\infty\le \frac 12 \|u_1-u_2\|_\infty.
$$
The proof is complete.
\end{proof}
In the case the function $f(\cdot,\cdot)$ satisfies global  Lipschitz condition, we have the following result.
\begin{theorem}\label{th-sol-ad}
Assume that the nonlinearity  $f:\mathbb H^{\mu}\times \mathbb H^{\mu}\to \mathbb H^{\mu-1-\delta}$ is continuous and satisfies the condition 
$$
\|f(v_1,w_1)-f(v_2,w_2)\|_{\mathbb H^{\mu-1-\delta}}\le L^*_f\|v_1-v_2\|_{\mathbb H^\mu}+K^*_f\|w_1-w_2\|_{\mathbb H^\mu}, 
$$
for all $v_1, v_2, w_1, w_2\in\mathbb H^\mu$, where  $L^*_f, K^*_f \ge 0$. Then, Problem  \eqref{e1}-\eqref{e3}  possesses a unique mild solution in the space $C([0,T];\mathbb H^\mu)$.
\end{theorem}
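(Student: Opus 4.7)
The plan is to rerun the fixed-point argument of Theorem \ref{th-sol}, exploiting the fact that the global Lipschitz condition removes the need to confine the iteration to a small invariant ball. Instead of the sup norm, I equip $C([0,T];\mathbb H^\mu)$ with an equivalent Bielecki-type weighted norm that forces the solution operator to be a strict contraction on the entire space.

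First I would verify that
\[
\Phi(u)(t) = S(t)\xi + \int_0^t S(t-\tau) f(u(\tau),\mathcal Hu(\tau))\, d\tau
\]
sends $C([0,T];\mathbb H^\mu)$ into itself. The global Lipschitz hypothesis yields the linear growth bound $\|f(v,w)\|_{\mathbb H^{\mu-1-\delta}} \le \|f(0,0)\|_{\mathbb H^{\mu-1-\delta}} + L_f^*\|v\|_{\mathbb H^\mu} + K_f^*\|w\|_{\mathbb H^\mu}$, and the continuity of $f$ together with the continuity of $u$ and of $\mathcal Hu$ in $\mathbb H^\mu$ places $\tau\mapsto f(u(\tau),\mathcal Hu(\tau))$ in $C([0,T];\mathbb H^{\mu-1-\delta})$. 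Lemma \ref{lm-sol-op}(d) then gives $\Phi(u)\in C([0,T];\mathbb H^\mu)$.

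For the contraction, I introduce, for a parameter $\beta>0$ to be chosen, the norm $\|u\|_\beta := \sup_{t\in[0,T]} e^{-\beta t}\|u(t)\|_{\mathbb H^\mu}$, which is equivalent to the sup norm. A change of variable gives
\[
e^{-\beta t}\|\mathcal Hu(t)\|_{\mathbb H^\mu} \le \|u\|_\beta \int_0^t |\ell(r)|e^{-\beta r}\,dr \le \|\ell\|_{L^1}\|u\|_\beta,
\]
so $\mathcal H$ remains bounded in the weighted norm independently of $\beta$. Feeding the Lipschitz hypothesis into Lemma \ref{lm-sol-op}(d) applied to the difference, I obtain
\[
e^{-2\beta t}\|\Phi(u_1)(t)-\Phi(u_2)(t)\|_{\mathbb H^\mu}^2 \le 2\bigl({L_f^*}^2+{K_f^*}^2\|\ell\|_{L^1}^2\bigr)\|u_1-u_2\|_\beta^2 \int_0^t (t-\tau)^{-\delta}e^{-2\beta(t-\tau)}\,d\tau,
\]
and the last integral is dominated by $\int_0^\infty r^{-\delta}e^{-2\beta r}\,dr = \Gamma(1-\delta)(2\beta)^{\delta-1}$, which vanishes as $\beta\to\infty$.

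Hence choosing $\beta$ large enough yields $\|\Phi(u_1)-\Phi(u_2)\|_\beta \le \tfrac12\|u_1-u_2\|_\beta$, and Banach's fixed point theorem in the complete space $(C([0,T];\mathbb H^\mu),\|\cdot\|_\beta)$ delivers the unique mild solution on the full interval $[0,T]$. The point I expect to require the most care is making the exponential weight neutralize the singular kernel $(t-\tau)^{-\delta}$ and the memory operator $\mathcal H$ simultaneously: the decay of $\int_0^\infty r^{-\delta}e^{-2\beta r}\,dr$ supplies the contraction factor, while the assumption $\ell\in L^1$ is what keeps $\|\mathcal H\|_\beta$ bounded uniformly in $\beta$.
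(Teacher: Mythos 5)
Your proposal is correct and follows essentially the same route as the paper: the paper's proof also equips $C([0,T];\mathbb H^\mu)$ with the weighted norm $\sup_t e^{-\beta t}\|u(t)\|_{\mathbb H^\mu}$, uses Lemma \ref{lm-sol-op}(d) on the difference, and controls $\mathcal H$ via $\int_0^\tau e^{-\beta z}|\ell(z)|\,dz \le \|\ell\|_{L^1}$, choosing $\beta$ so that $2({L_f^*}^2+{K_f^*}^2\|\ell\|_{L^1}^2)\int_0^T e^{-2\beta\tau}\tau^{-\delta}\,d\tau<1$. Your explicit bound $\int_0^\infty r^{-\delta}e^{-2\beta r}\,dr=\Gamma(1-\delta)(2\beta)^{\delta-1}$ merely makes the feasibility of that choice of $\beta$ more transparent.
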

\begin{proof}
In $C([0,T];\mathbb H^\mu)$, we use an equivalent norm
$$
\|u\|_{\beta,\infty} = \sup_{t\in [0,T]}e^{-\beta t}\|u(t)\|_{\mathbb H^\mu},
$$
where $\beta>0$ satisfies
$$
L^*:=2({L^*_f}^2+{K^*_f}^2\|\ell\|^2_{L^1})  \int_0^T e^{-2\beta \tau} \tau^{-\delta}d\tau <1.
$$
Consider the solution operator $\Phi$ as in the proof of Theorem \ref{th-sol}, where $u_1, u_2\in C([0,T];\mathbb H^\mu)$, we have
\begin{align*}
\|\Phi(u_1)(t)-\Phi(u_2)(t)\|^2_{\mathbb H^\mu} & \le 2 \int_0^t (t-\tau)^{-\delta}{L^*_f}^2 \|u_1(\tau)-u_2(\tau)\|^2_{\mathbb H^\mu}d\tau\\
& \qquad + 2\int_0^t (t-\tau)^{-\delta}{K^*_f}^2\|\mathcal H u_1(\tau)-\mathcal H u_2(\tau)\|^2_{\mathbb H^\mu} d\tau,
\end{align*}
according to the assumption of the theorem and Lemma \ref{lm-sol-op}(d). Then, it yields
\begin{align*}
e^{-2\beta t}\|\Phi(u_1)(t)&-\Phi(u_2)(t)\|^2_{\mathbb H^\mu} \\
& \le 2\int_0^t e^{-2\beta(t-\tau)} (t-\tau)^{-\delta}{L^*_f}^2 [e^{-2\beta\tau}\|u_1(\tau)-u_2(\tau)\|^2_{\mathbb H^\mu}]d\tau\\
& \qquad + 2\int_0^t e^{-2\beta(t-\tau)} (t-\tau)^{-\delta}{K^*_f}^2 [e^{-2\beta\tau}\|\mathcal H u_1(\tau)-\mathcal H  u_2(\tau)\|^2_{\mathbb H^\mu}]d\tau\\
& \le 2({L^*_f}^2+{K^*_f}^2\|\ell\|^2_{L^1})\left(\int_0^t e^{-2\beta \tau} \tau^{-\delta}d\tau\right)\|u_1-u_2\|^2_{\beta,\infty},
\end{align*}
here, we have used the following estimate
\begin{align*}
e^{-2\beta\tau}\|\mathcal H u_1(\tau)-\mathcal H  u_2(\tau)\|^2_{\mathbb H^\mu} &\le \left(\int_0^\tau e^{-\beta(\tau-z)}\ell(\tau-z)[e^{-\beta z}\|u_1(z)-u_2(z)\|_{\mathbb H^\mu}]dz \right)^2\\
& \le \left(\int_0^\tau e^{-\beta z}|\ell (z)| dz \right)^2 \|u_1-u_2\|^2_{\beta,\infty}\\
& \le \|\ell\|^2_{L^1} \|u_1-u_2\|^2_{\beta,\infty}.
\end{align*}
Therefore,
$$
\|\Phi(u_1)-\Phi(u_2)\|_{\beta,\infty}\le \sqrt{L^*}\|u_1-u_2\|_{\beta,\infty}.
$$
In other words, $\Phi$ is a contraction on $C([0,T];\mathbb H^\mu)$. This completes the proof.
\end{proof}
\begin{remark}\label{rm-sol-ad}
(i) In Theorem \ref{th-sol-ad}, we do not require that $\|\xi\|_{\mathbb H^\mu}$ is small. Besides, we also relax the condition on $L^*_f$ and $K^*_f$ in comparison with the assumptions of Theorem \ref{th-sol}. Moreover, the result obtained in Theorem \ref{th-sol-ad} still holds if we add to the right-hand side of equation \eqref{e1} an external force, that means the right-hand side of   \eqref{e1} has the form $f(u,\mathcal Hu) + g(t,x)$.

(ii) In the case $(1*m)^{-1}\in L^1(0,T)$, we achieve a similar result to that in Theorem \ref{th-sol-ad} assumming that the nonlinearity $f$ can take \lq weaker\rq\  values:
$$
\|f(v_1,w_1)-f(v_2,w_2)\|_{\mathbb H^{\mu-2}}\le L^*_f\|v_1-v_2\|_{\mathbb H^\mu}+K^*_f\|w_1-w_2\|_{\mathbb H^\mu}, 
$$
for all $v_1, v_2, w_1, w_2\in\mathbb H^\mu$, where $L^*_f, K^*_f \ge 0$. In this situation, we use the estimate in Lemma \ref{lm-sol-op}(e).
\end{remark}
\subsection{H\"older regularity}

In this part, we assume an additional contition on the kernel $m$ as follows.
\begin{enumerate}\it
\item[\rm (M*)] The hypothesis (M) is satisfied with a nonincreasing function $m$.
\end{enumerate}
Note that, under the assumption (M*), the resolvent $S(\cdot)$ is differentiable in $(0,\infty)$ and $\|S(t)\|\le t^{-1}$ for $t>0$ by Lemma \ref{lm-sol-op}.

For $\gamma\in (0,1)$, denote
$$
V^{\mu,\gamma}_{\rho,\rho^*} = B_{\rho^*} \cap \{u\in C([0,T];\mathbb H^\mu): \sup_{\substack{h>0\\ t\in (0,T-h]}} \frac{t^\gamma\|u(t+h)-u(t)\|_{\mathbb H^\mu}}{h^\gamma}\le \rho\}.
$$
We will show that the mild solution to problem \eqref{e1}-\eqref{e3} obtained by Theorem \ref{th-sol} is H\"older continuous in $(0,T]$ by proving that the solution mapping $\Phi$ is contractive on $V^{\mu,\gamma}_{\rho,\rho^*}$.
\begin{theorem}\label{th-Holder}
Assume that (M*) and all assumptions in Theorem \ref{th-sol} are satisfied. Moreover, with  $\gamma\in (\frac 12 \delta, \frac 12)$, we have
\begin{align*}
&\ell^*_1 = \sup_{\substack{h>0\\ t\in (0,T-h]}}\left(\frac th\right)^\gamma \int_t^{t+h} |\ell(\tau)| d\tau<\infty,\\
&16 B(1-\delta, 1-2\gamma) T^{1-\delta} ( {L_f^*}^2 + {K_f^*}^2 {\ell^*_2}^2)<1,
\end{align*}
where $B(\cdot,\cdot)$ is the Beta function and
$$
\ell^*_2 = \sup_{t\in (0,T]}t^\gamma\int_0^t \frac{|\ell(\tau)|}{(t-\tau)^\gamma}d\tau.
$$
Then, the solution to problem \eqref{e1}-\eqref{e3} is H\"older continuous on $(0,T]$.
\end{theorem}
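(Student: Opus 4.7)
The plan is to work in the closed subset $V^{\mu,\gamma}_{\rho,\rho^*}$ of $B_{\rho^*}$, with $\rho^*$ as in Theorem \ref{th-sol} and $\rho$ to be fixed large enough that $\Phi$ leaves this subset invariant, and then invoke the sup-norm uniqueness of Theorem \ref{th-sol} to conclude that the unique fixed point $u$ of $\Phi$ in $B_{\rho^*}$ automatically lies in $V^{\mu,\gamma}_{\rho,\rho^*}$, hence is weighted-H\"older continuous on $(0,T]$. Since the weighted H\"older condition is a pointwise inequality on pairs $(t,h)$, $V^{\mu,\gamma}_{\rho,\rho^*}$ is closed in the sup-norm topology of $C([0,T];\mathbb H^\mu)$, which is what this closure argument requires.

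The central computation is the weighted H\"older seminorm bound for $v:=\Phi(u)$ with $u\in V^{\mu,\gamma}_{\rho,\rho^*}$. Writing $g(\tau)=f(u(\tau),\mathcal Hu(\tau))$ and performing the shift $\tau\mapsto \tau+h$ in the first integral, one arrives at
\begin{align*}
v(t+h)-v(t) &= [S(t+h)-S(t)]\xi + \int_0^h S(t+h-\sigma)\, g(\sigma)\, d\sigma \\
&\qquad + \int_0^t S(t-\tau)[g(\tau+h)-g(\tau)]\, d\tau.
\end{align*}
This decomposition transfers the increment from $S$ inside the convolution onto $g$, which is crucial because the bound $\|S'(s)\|\le s^{-1}$ from Lemma \ref{lm-sol-op}(c) is available only on $L^2$ and cannot be applied directly to data in the dual space $\mathbb H^{\mu-1-\delta}$.

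Each piece is controlled in $\mathbb H^\mu$. For the first, the spectral representation promotes $\|S'(s)\|_{L^2\to L^2}\le s^{-1}$ to the same bound on $\mathbb H^\mu\to \mathbb H^\mu$, giving $\|[S(t+h)-S(t)]\xi\|_{\mathbb H^\mu}\le \log(1+h/t)\|\xi\|_{\mathbb H^\mu}$, and the elementary estimate $\log(1+x)\le C_\gamma x^\gamma$ on $(0,\infty)$ supplies the desired $(h/t)^\gamma$ rate. For the middle piece, Lemma \ref{lm-sol-op}(d) applied to the truncated integrand together with $(t+h-\sigma)^{-\delta}\le t^{-\delta}$ for $\sigma\in [0,h]$ yields $\|\int_0^h S(t+h-\sigma)g(\sigma)\, d\sigma\|_{\mathbb H^\mu}\le C\rho^* t^{-\delta/2}h^{1/2}$; the assumptions $\gamma>\delta/2$ and $\gamma<1/2$ render $t^{\gamma-\delta/2}h^{1/2-\gamma}$ bounded by a power of $T$. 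For the third piece, hypothesis (F) combined with the definition of $V^{\mu,\gamma}_{\rho,\rho^*}$ gives $\|u(\tau+h)-u(\tau)\|_{\mathbb H^\mu}\le \rho\tau^{-\gamma}h^\gamma$, and the decomposition
\begin{align*}
\mathcal Hu(\tau+h)-\mathcal Hu(\tau)=\int_0^\tau \ell(s)[u(\tau+h-s)-u(\tau-s)]\, ds+\int_\tau^{\tau+h}\ell(s) u(\tau+h-s)\, ds
\end{align*}
together with the definitions of $\ell^*_1$ and $\ell^*_2$ produces $\|\mathcal Hu(\tau+h)-\mathcal Hu(\tau)\|_{\mathbb H^\mu}\le (\rho\ell^*_2+\rho^*\ell^*_1)\tau^{-\gamma}h^\gamma$. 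Feeding these into Lemma \ref{lm-sol-op}(d) gives
$$
\Bigl\|\int_0^t S(t-\tau)[g(\tau+h)-g(\tau)]\, d\tau\Bigr\|_{\mathbb H^\mu}^2\le C\,h^{2\gamma}\, B(1-\delta,1-2\gamma)\, t^{1-\delta-2\gamma},
$$
and after multiplying by $t^{2\gamma}$ the surviving factor $t^{1-\delta}$ is bounded by $T^{1-\delta}$. Summing the three contributions and invoking the smallness condition $16B(1-\delta,1-2\gamma)T^{1-\delta}({L_f^*}^2+{K_f^*}^2{\ell^*_2}^2)<1$ (with the same $\epsilon$-cushion on $L^*_f,K^*_f$ as in the proof of Theorem \ref{th-sol}) fixes a $\rho$ for which $\Phi(V^{\mu,\gamma}_{\rho,\rho^*})\subset V^{\mu,\gamma}_{\rho,\rho^*}$.

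The main obstacle is exactly the third piece. The estimate $\|S'(t)\|\le t^{-1}$ lives on $L^2$ only, so the naive identity $S(t+h-\tau)-S(t-\tau)=\int_{t-\tau}^{t+h-\tau}S'(s)\, ds$ cannot be applied under an integral against $g\in \mathbb H^{\mu-1-\delta}$. The shift-and-subtract reformulation above transfers the increment onto $g$, and the a priori H\"older regularity of $u$ that is built into $V^{\mu,\gamma}_{\rho,\rho^*}$ is precisely what allows one to bound the resulting $g$-increment and close the scheme; the Beta-function integral is the signature of this trade and is the reason the range $\gamma\in(\delta/2,1/2)$ appears.
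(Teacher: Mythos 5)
Your proposal is correct and follows essentially the same route as the paper: the invariant set $V^{\mu,\gamma}_{\rho,\rho^*}$, the three-term decomposition that shifts the increment from $S$ onto $f(u,\mathcal Hu)$ (precisely because $\|S'(t)\|\le t^{-1}$ is unavailable on the dual scale), the $\ell^*_1,\ell^*_2$ bounds for the increment of $\mathcal Hu$, the Beta-function estimate for the main convolution term, and the choice of large $\rho$ under the stated smallness condition. Your explicit remark that $V^{\mu,\gamma}_{\rho,\rho^*}$ is closed in the sup norm, so the contraction's fixed point from Theorem \ref{th-sol} automatically lands in it, is a point the paper leaves implicit but is exactly the intended conclusion.
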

\begin{proof}
It suffices to show that $\Phi(V^{\mu,\gamma}_{\rho,\rho^*})\subset V^{\mu,\gamma}_{\rho,\rho^*}$ for a certain $\rho>0$.

Since $S(\cdot)$ is differentiable  in $(0,\infty)$, using the Mean value theorem, we have
\begin{align}
\|[S(t+h)-S(t)]\xi\|_{\mathbb H^\mu}&\le h\int_0^1\|S'(t+\zeta h)\xi\|_{\mathbb H^\mu}d\zeta\notag\\
& \le h\|\xi\|_{\mathbb H^\mu}\int_0^1\frac{d\zeta}{t+\zeta h} = \|\xi\|_{\mathbb H^\mu}\ln \left(1+\frac ht\right)\notag\\
& \le \|\xi\|_{\mathbb H^\mu}\gamma^{-1} t^{-\gamma} h^\gamma. \label{th-Holder-1}
\end{align}
On the other hand, for $u\in V^{\mu,\gamma}_{\rho,\rho^*}$, we have
\begin{align*}
& \|\mathcal H u(t+h)-\mathcal H u(t)\|_{\mathbb H^\mu} \le \int_t^{t+h} |\ell(\tau)|\|u(t+h-\tau)\|_{\mathbb H^\mu}d\tau \\
& \quad +\int_0^t |\ell(\tau)|\| u(t+h-\tau)-u(t-\tau)\|_{\mathbb H^\mu}d\tau\\
& \le t^{-\gamma}h^\gamma \rho^* \left(\frac th\right)^\gamma \int_t^{t+h} |\ell(\tau)| d\tau \\
& \quad + t^{-\gamma}h^\gamma t^\gamma\int_0^t \frac{|\ell(\tau)|}{(t-\tau)^\gamma} [(t-\tau)^\gamma h^{-\gamma}\| u(t+h-\tau)-u(t-\tau)\|_{\mathbb H^\mu}]d\tau\\
& \le t^{-\gamma}h^\gamma \rho^*\left[ \left(\frac th\right)^\gamma \int_t^{t+h} |\ell(\tau)| d\tau \right]
+ t^{-\gamma}h^\gamma \rho \left[t^\gamma\int_0^t \frac{|\ell(\tau)|}{(t-\tau)^\gamma}d\tau\right].
\end{align*}
Hence,
\begin{equation}\label{th-Holder-2}
\|\mathcal H u(t+h)-\mathcal H u(t)\|_{\mathbb H^\mu} \le t^{-\gamma}h^\gamma (\rho^* \ell^*_1 + \rho\ell^*_2).
\end{equation}
Denote
\begin{equation}\label{Dh}
D_h f(u)(t) = f(u(t+h),\mathcal H u(t+h))-f(u(t),\mathcal Hu(t)).
\end{equation}
We have
\begin{align}
& \|D_h f(u)(t)\|_{\mathbb H^{\mu-1-\delta}} \le \|f(u(t+h),\mathcal H u(t+h)) - f(u(t),\mathcal H u(t))\|_{\mathbb H^{\mu-1-\delta}}\notag\\
&\quad \le L_f(\rho^*)\|u(t+h)-u(t)\|_{\mathbb H^\mu}+K_f(\rho^*\|\ell\|_{L^1})\|\mathcal H u(t+h)-\mathcal H u(t)\|_{\mathbb H^\mu}\notag\\
&\quad  \le t^{-\gamma} h^\gamma L_f(\rho^*)\rho  + t^{-\gamma}h^\gamma K_f(\rho^*\|\ell\|_{L^1}) (\rho^* \ell^*_1 + \rho\ell^*_2),\label{th-Holder-3}
\end{align}
here, we employed the estimate \eqref{th-Holder-2}.

Finally, it holds that
\begin{align*}
\|\Phi(u)(t+h)&-\Phi(u)(t)\|^2_{\mathbb H^\mu}\le 2\|[S(t+h)-S(t)]\xi\|^2_{\mathbb H^\mu} \\
&+ 4\int_0^t \tau^{-\delta}\|D_h f(u)(t-\tau)\|^2_{\mathbb H^{\mu-1-\delta}}d\tau\\
&+ 4\int_t^{t+h}\tau^{-\delta}\|f(u(t+h-\tau),\mathcal H u(t+h-\tau) )\|^2_{\mathbb H^{\mu-1-\delta}}d\tau \\
& = E_1(t) + E_2(t)+E_3(t).
\end{align*}
By \eqref{th-Holder-1}, we get
\begin{equation*}
E_1(t) \le 2\gamma^{-2}t^{-2\gamma}h^{2\gamma} \|\xi\|^2_{\mathbb H^\mu}.
\end{equation*}
Using \eqref{th-Holder-3}, we gain
\begin{align*}
E_2(t) & \le  8 h^{2\gamma}[\rho^2 L_f(\rho^*)^2 + K_f(\rho^*\|\ell\|^2_{L^1})^2(\rho^* \ell^*_1 + \rho\ell^*_2)^2] \int_0^t \tau^{-\delta}(t-\tau)^{-2\gamma} d\tau\\
& \le 8 h^{2\gamma}t^{-2\gamma}[\rho^2 L_f(\rho^*)^2 + K_f(\rho^*\|\ell\|^2_{L^1})^2(\rho^* \ell^*_1 + \rho\ell^*_2)^2] B(1-\delta,1-2\gamma) T^{1-\delta}\\
& \le 16 h^{2\gamma}t^{-2\gamma} B(1-\delta,1-2\gamma) T^{1-\delta} \\
& \quad \times [ (L_f(\rho^*)^2 + K_f(\rho^*\|\ell\|^2_{L^1})^2 {\ell^*_2}^2)\rho^2
+K_f(\rho^*\|\ell\|^2_{L^1})^2 {\rho^*}^2 {\ell^*_1}^2 ]   
\end{align*}
here we utilized the identity
\begin{align*}
\int_0^t \tau^{-\delta}(t-\tau)^{-2\gamma} d\tau =B(1-\delta,1-2\gamma) t^{1-\delta-2\gamma}.
\end{align*}
We can estimate  $E_3(t)$ as follows
\begin{align*}
E_3(t) & = 4\int_0^h (t+h-\tau)^{-\delta}\|f(u(\tau),\mathcal H u(\tau) )\|^2_{\mathbb H^{\mu-1-\delta}}d\tau\\
& \le 8{\rho^*}^2 [L_f(\rho^*)^2 + K_f(\rho^*\|\ell\|_{L^1})^2\|\ell\|^2_{L^1}]\int_0^h (t+h-\tau)^{-\delta}d\tau.
\end{align*}
Noting that
\begin{align*}
\int_0^h (t+h-\tau)^{-\delta}d\tau & = (1-\delta)^{-1}[(t+h)^{1-\delta}-t^{1-\delta}] \\
& \le h t^{-\delta}\le h^{2\gamma} t^{-2\gamma} h^{1-2\gamma}t^{2\gamma-\delta}\le h^{2\gamma} t^{-2\gamma} T^{1-\delta},
\end{align*}
we obtain
\begin{equation*}
E_3(t) \le 8h^{2\gamma} t^{-2\gamma} (\rho^*)^2 T^{1-\delta} [L_f(\rho^*)^2 + K_f(\rho^*\|\ell\|_{L^1})^2\|\ell\|^2_{L^1}].
\end{equation*}
Combining the estimates of $E_1(t), E_2(t)$ and $E_3(t)$ above, we conclude that
\begin{align*}
\left(\frac th\right)^{2\gamma}& \|\Phi(u)(t+h)-\Phi(u)(t)\|^2_{\mathbb H^\mu} \le 2\gamma^{-2} \|\xi\|^2_{\mathbb H^\mu}\\
& + 16 B(1-\delta,1-2\gamma) T^{1-\delta} [ ({L_f^*}^2 + {K_f^*}^2 {\ell^*_2}^2+\epsilon)\rho^2
+K_f(\rho^*\|\ell\|^2_{L^1})^2 {\rho^*}^2 {\ell^*_1}^2 ]\\
& + 8 {\rho^*}^2 T^{1-\delta} [L_f(\rho^*)^2 + K_f(\rho^*\|\ell\|_{L^1})^2\|\ell\|^2_{L^1}],
\end{align*}
here, $\epsilon>0$ is chosen such that
$$
16 T^{1-\delta}[L_f(\rho^*)^2 + K_f(\rho^*\|\ell\|^2_{L^1})^2 {\ell^*_2}^2]\le 16 T^{1-\delta}({L_f^*}^2 + {K_f^*}^2 {\ell^*_2}^2+\epsilon)< \frac1{B(1-\delta,1-2\gamma)}.
$$
Now, we take a sufficiently large  $\rho> 0$ such that
$$
\left(\frac th\right)^{2\gamma} \|\Phi(u)(t+h)-\Phi(u)(t)\|^2_{\mathbb H^\mu} \le \rho^2, \text{ for all } h>0, t\in (0,T-h].
$$
This implies that $\Phi(u)\in W^{\mu,\gamma}_{\rho,\rho^*}$. The proof is complete.
\end{proof}
We now consider the case when the nonlinearity $f(u(t),\mathcal Hu(t))$ is more regular, i.e. it takes values in $\mathbb H^{\mu-1}$. We will show the H\"older continuity of the solution without the assumptions on the coefficients as in Theorem \ref{th-Holder}.
\begin{theorem}\label{th-solb}
Assume that the condition (M*) and (F) are satisfied with $\theta=1-\mu$, and
$$
\limsup\limits_{\rho\to 0}L_f(\rho)=L_f^*,\; \limsup\limits_{\rho'\to 0}K_f(\rho')=K_f^*,
$$
such that
$$
4({L^*_f}^2+{K^*_f}^2\|\ell\|^2_{L^1})<\lambda_1.
$$
Then, there exists $\eta>0$ such that for $\|\xi\|_{\mathbb H^\mu}\le \eta$, problem \eqref{e1}-\eqref{e3} has a unique mild solution $u$ in $[0,T]$. Moreover, $u(\cdot)$ is H\"older continuous on $(0,T]$.
\end{theorem}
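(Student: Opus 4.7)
The theorem has two assertions, existence in a ball of $C([0,T];\mathbb H^\mu)$ and H\"older continuity on $(0,T]$; the common engine behind both is Lemma~\ref{lm-sol-op}(b), which is now available because $f(u,\mathcal Hu)$ lies in $\mathbb H^{\mu-1}$, combined with Proposition~\ref{pp-relax-func}(b) giving $\int_0^T\omega(s,\lambda_1)ds\le\lambda_1^{-1}$. The spectral-gap hypothesis $4(L_f^{*2}+K_f^{*2}\|\ell\|_{L^1}^2)<\lambda_1$ is tailored so that $4\lambda_1^{-1}(L_f^{*2}+K_f^{*2}\|\ell\|_{L^1}^2)<1$, i.e.\ so that multiplying the squared Lipschitz constants by the total mass of $\omega$ stays strictly below one; this is what replaces the $T^{1-\delta}$-smallness conditions of Theorems~\ref{th-sol} and~\ref{th-Holder}.

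\textbf{Existence.} Follow the template of Theorem~\ref{th-sol}. For $u\in B_{\rho^*}$, Lemma~\ref{lm-sol-op}(b) and (F) give $\|\Phi(u)(t)\|_{\mathbb H^\mu}^2\le 2\|\xi\|_{\mathbb H^\mu}^2+4\lambda_1^{-1}(L_f(\rho^*)^2+K_f(\rho^*\|\ell\|_{L^1})^2\|\ell\|_{L^1}^2)\rho^{*2}$. The limsup hypothesis lets me pick $\rho^*$ small so that the parenthesized coefficient is $\le L_f^{*2}+K_f^{*2}\|\ell\|_{L^1}^2+\epsilon$, whereupon the spectral-gap assumption bounds the whole prefactor by $1/2$; setting $\eta=\rho^*/2$ makes $\Phi(B_{\rho^*})\subset B_{\rho^*}$ whenever $\|\xi\|_{\mathbb H^\mu}\le\eta$. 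The same computation applied to $u_1-u_2$ gives a strict contraction, hence a unique mild solution.

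\textbf{H\"older regularity: setup and easy terms.} Mimic Theorem~\ref{th-Holder}: fix $\gamma\in(0,1/2)$ and aim to show $\Phi(V^{\mu,\gamma}_{\rho,\rho^*})\subset V^{\mu,\gamma}_{\rho,\rho^*}$ for a suitably large $\rho$. Decompose $\Phi(u)(t+h)-\Phi(u)(t)=T_1+T_2+T_3$ in the usual way. The estimate $\|T_1\|_{\mathbb H^\mu}^2\le\gamma^{-2}t^{-2\gamma}h^{2\gamma}\|\xi\|_{\mathbb H^\mu}^2$ follows from Lemma~\ref{lm-sol-op}(c) and the elementary $\ln(1+x)\le\gamma^{-1}x^\gamma$; the estimate $\|T_3\|_{\mathbb H^\mu}^2\le CM^2t^{-2\gamma}h^{2\gamma}$ follows from Lemma~\ref{lm-sol-op}(b), the uniform bound $\|f(u,\mathcal Hu)\|_{\mathbb H^{\mu-1}}\le M$ on $B_{\rho^*}$, and the inequality $\int_t^{t+h}\omega(s,\lambda_1)ds\le\lambda_1^{-1}\ln(1+h/t)\le\lambda_1^{-1}(2\gamma)^{-1}(h/t)^{2\gamma}$.

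\textbf{Main obstacle: the term $T_2$.} Lemma~\ref{lm-sol-op}(b) reduces it to controlling $\int_0^t\omega(t-\tau,\lambda_1)\|D_hf(u)(\tau)\|^2_{\mathbb H^{\mu-1}}d\tau$, and after invoking (F) the remaining task splits into a $\|u(\tau+h)-u(\tau)\|^2$-integral and an $\|\mathcal Hu(\tau+h)-\mathcal Hu(\tau)\|^2$-integral. The first is dispatched by the auxiliary bound $\int_0^t\omega(t-\tau,\lambda_1)\tau^{-2\gamma}d\tau\le C_\gamma\lambda_1^{-1}t^{-2\gamma}$, proved by splitting at $t/2$, using $\omega(r,\lambda_1)\le(1+\lambda_1 r)^{-1}$ on $\tau\in[0,t/2]$ and Proposition~\ref{pp-relax-func}(b) on $\tau\in[t/2,t]$. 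The $\mathcal Hu$-piece is the true difficulty because the finiteness of $\ell_1^*$ and $\ell_2^*$ assumed in Theorem~\ref{th-Holder} is not available: the plan is to decompose $\mathcal Hu(\tau+h)-\mathcal Hu(\tau)=\int_\tau^{\tau+h}\ell(s)u(\tau+h-s)ds+\mathcal H[u(\cdot+h)-u(\cdot)](\tau)$, apply Cauchy--Schwarz with weight $|\ell|$ to the convolution piece, and then use Fubini to push the outer $\omega$-integral against the inner $|\ell|$-integral. Combined with the auxiliary $t^{-2\gamma}$-bound just proved, this should yield a $K_f^{*2}$-contribution to $\|T_2\|^2$ of the form $C\lambda_1^{-1}\|\ell\|_{L^1}^2\rho^2 t^{-2\gamma}h^{2\gamma}$; it is at precisely this juncture that the spectral-gap hypothesis makes the combined $L_f^{*2}+K_f^{*2}\|\ell\|_{L^1}^2$ coefficient strictly less than one, closing the invariance argument for $\rho$ chosen large relative to $\|\xi\|_{\mathbb H^\mu}$ and $\rho^*$. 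I expect that verifying the Fubini-rearranged $\ell$-convolution genuinely produces a clean $t^{-2\gamma}$-bound uniformly in $\ell\in L^1$ is the most technically delicate point of the proof.
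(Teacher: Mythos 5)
Your existence argument follows the paper's: Lemma~\ref{lm-sol-op}(b) together with $\int_0^t\omega(\tau,\lambda_1)d\tau\le\lambda_1^{-1}$ turns the spectral-gap hypothesis into invariance of a small ball and a $\frac12$-contraction. (A minor quantitative point: $\eta=\rho^*/2$ only works if $4\lambda_1^{-1}({L_f^*}^2+{K_f^*}^2\|\ell\|_{L^1}^2+\epsilon)\le\tfrac12$; in general $\eta$ must be taken small in proportion to the gap $1-4\lambda_1^{-1}(\cdots)$, which is how the paper chooses it.)

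The H\"older part is where your plan breaks. You propose to re-run the invariant-set fixed-point argument of Theorem~\ref{th-Holder} on $V^{\mu,\gamma}_{\rho,\rho^*}$, and you correctly identify that the hypotheses $\ell_1^*,\ell_2^*<\infty$ are no longer available. But the Cauchy--Schwarz/Fubini repair you sketch does not produce a coefficient of $\rho^2$ that is strictly less than one. Concretely, for the term $\mathcal H[u(\cdot+h)-u(\cdot)]$ with $u\in V^{\mu,\gamma}_{\rho,\rho^*}$, Young's inequality gives
\begin{align*}
\int_0^t\omega(t-\tau,\lambda_1)\big\|\mathcal H[u(\cdot+h)-u(\cdot)](\tau)\big\|_{\mathbb H^\mu}^2\,d\tau
\le \|\ell\|_{L^1}^2\,\rho^2 h^{2\gamma}\,\frac{t^{1-2\gamma}}{1-2\gamma},
\end{align*}
so after multiplying by $(t/h)^{2\gamma}$ the $K_f$-contribution is of order ${K_f^*}^2\|\ell\|_{L^1}^2\rho^2\,T/(1-2\gamma)$ --- a coefficient that the hypothesis $4({L_f^*}^2+{K_f^*}^2\|\ell\|_{L^1}^2)<\lambda_1$ does not control, since it grows with $T$ and with $\gamma\uparrow\tfrac12$. (Trying to keep $\omega$ in the integral instead leads, after Fubini, to $\int_0^t|\ell(s)|(t-s)^{-2\gamma}ds$, which for a general $\ell\in L^1$ need not be bounded by $Ct^{-2\gamma}\|\ell\|_{L^1}$ and may even diverge.) Even your auxiliary bound $\int_0^t\omega(t-\tau,\lambda_1)\tau^{-2\gamma}d\tau\le C_\gamma\lambda_1^{-1}t^{-2\gamma}$ carries a constant $C_\gamma\ge 2$, so already the $L_f$-piece yields $4C_\gamma\lambda_1^{-1}{L_f^*}^2\rho^2$, which the stated hypothesis does not force below $\rho^2$. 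Hence the invariance $\Phi(V^{\mu,\gamma}_{\rho,\rho^*})\subset V^{\mu,\gamma}_{\rho,\rho^*}$ cannot be closed under the theorem's assumptions, which is precisely why the paper abandons the fixed-point route here. The paper instead proves H\"older continuity \emph{a posteriori}: it writes $u(t+h)-u(t)$ for the already-constructed solution, bounds the three pieces using Lemma~\ref{lm-sol-op}(b) with $\omega\le1$, and arrives at a singular integral inequality
\begin{align*}
\|u(t+h)-u(t)\|^2_{\mathbb H^\mu}\le C_0h^{2\gamma}t^{-2\gamma}+C_1\int_0^t\tau^{-2\gamma}\big[\tau^{2\gamma}\|u(\tau+h)-u(\tau)\|^2_{\mathbb H^\mu}\big]\,d\tau,
\end{align*}
to which the Gronwall inequality applies for \emph{any} size of $C_1$. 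That is the missing idea: replace the contraction/invariance mechanism (which needs small coefficients) by a Gronwall argument (which does not).
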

\begin{proof}
Similar to the proof of Theorem \ref{th-sol}, first we look for $\rho^*>0$ such that $\Phi(B_{\rho^*})\subset  B_{\rho^*}$. 
Let  $u\in B_\rho$, $\rho>0$, and $f(u(t),\mathcal Hu(t))\in \mathbb H^{\mu-1}$. 

Then using Lemma \ref{lm-sol-op}, we obtain
\begin{align*}
\|S*f(u(\cdot),&\mathcal Hu(\cdot))\|^2_{\mathbb H^\mu}  \le \int_0^t \omega(t-\tau,\lambda_1)\|f(u(\tau),\mathcal Hu(\tau))\|^2_{\mathbb H^{\mu-1}}d\tau\\
& \le 2\int_0^t \omega(t-\tau,\lambda_1) [L_f(\rho)^2 \| u(\tau)\|^2_{\mathbb H^\mu} + K_f(\rho\|\ell\|_{L^1})^2 \| \mathcal H u(\tau)\|^2_{\mathbb H^\mu}]d\tau\\
& \le 2\int_0^t \omega(t-\tau,\lambda_1) [L_f(\rho)^2  + K_f(\rho\|\ell\|_{L^1})^2\|\ell\|^2_{L^1} ] \| u(\tau)\|^2_{\mathbb H^\mu}d\tau\\
& \le 2\int_0^t \omega(t-\tau,\lambda_1) ({L_f^*}^2  + {K_f^*}^2\|\ell\|_{L^1}^2+\epsilon ) \| u(\tau)\|^2_{\mathbb H^\mu}d\tau,
\end{align*}
for $\epsilon>0$ and $\rho^*>0$ is chosen such that 
$$
4[L_f(\rho)^2  + K_f(\rho\|\ell\|_{L^1})^2\|\ell\|^2_{L^1}]\le 4({L_f^*}^2  + {K_f^*}^2\|\ell\|_{L^1}^2+\epsilon) \le \lambda_1,
$$
for all $\rho\le \rho^*$. Then,
\begin{align*}
\|S*f(u(\cdot),&\mathcal Hu(\cdot))\|^2_{\mathbb H^\mu}  \le 2\lambda_1^{-1}(1-\omega(t,\lambda_1)) ({L_f^*}^2  + {K_f^*}^2\|\ell\|_{L^1}^2+\epsilon){\rho^*}^2.
\end{align*}
We can estimate the solution operator $\Phi$ as follows
\begin{align*}
\|\Phi(u)(t)\|_{\mathbb H^\mu}^2&\le 2\omega(t,\lambda_1)^2\|\xi\|^2_{\mathbb H^\mu} + 2\|S*f(u(\cdot),\mathcal Hu(\cdot))\|^2_{\mathbb H^\mu}\\
& \le 2\omega(t,\lambda_1) \|\xi\|^2_{\mathbb H^\mu}+4 \lambda_1^{-1}(1-\omega(t,\lambda_1)) ({L_f^*}^2  + {K_f^*}^2\|\ell\|_{L^1}^2+\epsilon){\rho^*}^2\\
& \le 2\omega(t,\lambda_1)[\|\xi\|^2_{\mathbb H^\mu}-2 \lambda_1^{-1}({L_f^*}^2  + {K_f^*}^2\|\ell\|_{L^1}^2+\epsilon){\rho^*}^2]\\
& \qquad + 4 \lambda_1^{-1}({L_f^*}^2  + {K_f^*}^2\|\ell\|_{L^1}^2+\epsilon){\rho^*}^2.
\end{align*}
Take $\eta=2 \lambda_1^{-1}({L_f^*}^2  + {K_f^*}^2\|\ell\|_{L^1}^2+\epsilon){\rho^*}^2$, with $\|\xi\|_{\mathbb H^\mu}\le \eta$ we have
\begin{align*}
\|\Phi(u)(t)\|_{\mathbb H^\mu}^2&\le 4 \lambda_1^{-1}({L_f^*}^2  + {K_f^*}^2\|\ell\|_{L^1}^2+\epsilon){\rho^*}^2 \le {\rho^*}^2.
\end{align*}
Hence $\Phi(u)\in B_{\rho^*}$. Then, we show that $\Phi$ is contractive on $B_{\rho^*}$. For $u_1, u_1\in B_{\rho^*}$, we have
\begin{align*}
\|\Phi(u_1)(t)&-\Phi(u_2)(t)\|_{\mathbb H^\mu}^2 \\
& \le \int_0^t \omega(t-\tau,\lambda_1)\|f(u_1(\tau),\mathcal Hu_1(\tau))-f(u_2(\tau),\mathcal Hu_2(\tau))\|^2_{\mathbb H^{\mu-1}}d\tau\\
& \le  \int_0^t \omega(t-\tau,\lambda_1) ({L_f^*}^2+{K_f^*}^2\|\ell\|^2_{L^1}+\epsilon)\|u_1(\tau)-u_2(\tau)\|^2_{\mathbb H^\mu}d\tau\\
& \le \lambda_1^{-1}({L_f^*}^2+{K_f^*}^2\|\ell\|^2_{L^1}+\epsilon)\sup_{\tau\in [0,T]}\|u_1(\tau)-u_2(\tau)\|^2_{\mathbb H^\mu}\\
& \le \frac 14 \sup_{\tau\in [0,T]}\|u_1(\tau)-u_2(\tau)\|^2_{\mathbb H^\mu}.
\end{align*}
Therefore
$$
\|\Phi(u_1)-\Phi(u_2)|_\infty\le \frac 12 \|u_1-u_2\|_\infty,
$$
which implies that $\Phi$ is contractive on $B_{\rho^*}$, and problem \eqref{e1}-\eqref{e3} possesses a unique mild solution $u\in B_{\rho^*}$.

It remains to prove that this solution is  H\"older continuous on $(0, T]$. Observe that
\begin{align*}
u(t+h)-u(t) &= [S(t+h)-S(t)]\xi + \int_0^t S(\tau) D_h f(u) (t-\tau)d\tau\\
& + \int_t^{t+h} S(\tau) f(u(t+h-\tau),\mathcal Hu(t+h-\tau)) d\tau.
\end{align*}
Then
\begin{align*}
\|u(t+h)-u(t)\|^2_{\mathbb H^\mu}&\le 3 \|[S(t+h)-S(t)]\xi\|^2_{\mathbb H^\mu} \\
& + 3\int_0^t \|D_h f(u)(t-\tau)\|^2_{\mathbb H^{\mu-1}}d\tau\\
& + 3\int_t^{t+h}\|f(u(t+h-\tau),\mathcal Hu(t+h-\tau))\|_{\mathbb H^{\mu-1}}^2d\tau\\
& = F_1(t) + F_2(t) + F_3(t),
\end{align*}
here we employed Lemma \ref{lm-sol-op}(b) and the fact that $\omega(t,\lambda_1)\le 1$.

By an analoguous argument as in the proof of Theorem \ref{th-Holder}, we get
\begin{align*}
\|[S(t+h)-S(t)]\xi\|_{\mathbb H^\mu}& \le \gamma^{-1} h^{\gamma} t^{-\gamma}\|\xi\|_{\mathbb H^\mu}, \gamma\in (0,\frac 12).
\end{align*}
So it is clear that
$$
F_1(t) \le 3 \gamma^{-2} h^{2\gamma} t^{-2\gamma}\|\xi\|_{\mathbb H^\mu}^2.
$$
Moreover,
\begin{align*}
F_2(t) & \le 6 \int_0^t ({L_f^*}^2+{K_f^*}^2\|\ell\|^2_{L^1}+\epsilon)\|u(t+h-\tau)-u(t-\tau)\|^2_{\mathbb H^\mu}d\tau\\
& = 6 \int_0^t ({L_f^*}^2+{K_f^*}^2\|\ell\|^2_{L^1}+\epsilon)\|u(\tau+h)-u(\tau)\|^2_{\mathbb H^\mu}d\tau,
\end{align*}
\begin{align*}
F_3(t)&\le 6 \int_t^{t+h}  ({L_f^*}^2+{K_f^*}^2\|\ell\|^2_{L^1}+\epsilon){\rho^*}^2 d\tau\\
& \le 6h({L_f^*}^2+{K_f^*}^2\|\ell\|^2_{L^1}+\epsilon){\rho^*}^2\\
& \le 6T h^{2\gamma}t^{-2\gamma} ({L_f^*}^2+{K_f^*}^2\|\ell\|^2_{L^1}+\epsilon){\rho^*}^2,
\end{align*}
thanks to the fact that $0<h, t<T$. 
Putting the estimates of $F_1(t), F_2(t)$ and $F_3(t)$ together, we get
\begin{align*}
\|u(t+h)&-u(t)\|^2_{\mathbb H^\mu}
 \le C_0 h^{2\gamma} t^{-2\gamma}  +  \int_0^t C_1\tau^{-2\gamma} [\tau^{2\gamma}\|u(\tau+h)-u(\tau)\|^2_{\mathbb H^\mu}]d\tau
\end{align*}
with 
\begin{align*}
C_0 &= 3\gamma^{-2} \|\xi\|^2_{\mathbb H^\mu}+6T ({L_f^*}^2+{K_f^*}^2\|\ell\|^2_{L^1}+\epsilon){\rho^*}^2,\\
C_1 & = 6({L_f^*}^2+{K_f^*}^2\|\ell\|^2_{L^1}+\epsilon).
\end{align*}
Applying the Gronwall inequality, we obtain
\begin{align*}
t^{2\gamma}\|u(t+h)&-u(t)\|^2_{\mathbb H^\mu} \le e^{C_1(1-2\gamma)^{-1} t^{1-2\gamma}} C_0 h^{2\gamma}\text{ for } t>0.
\end{align*}
In other words, $u(\cdot)$ is  H\"older continuous on $(0,T]$.
\end{proof}
\begin{example}\rm 
Consider the case $d\ge 3$ and
$$
f(u(t),\mathcal Hu(t)) = |u(t)|^p +  \int_0^t \ell (t-\tau) (\chi\cdot\nabla) u(\tau) d\tau, p>1,
$$
with $\chi\in [L^\infty (\Omega)]^d$. The nonlinear function contains the advection term which depends on the history of states.  Noting that, for $\theta\in (0,1)$, $q=\frac{2d}{d+2\theta}$,  $\hat p=\frac{2d}{d-2}$, $\hat q=\frac{2d}{2+2\theta}$, we have
$$
\frac{1}{\hat p} +\frac{1}{\hat q}=\frac 1q.
$$
Applying general H\"older inequality with $u\in L^{(p-1)\hat q}(\Omega), v\in L^{\hat p}(\Omega)$, we gain
\begin{align*}
\||u|^{p-1}v\|_{L^q} &\le \||u|^{p-1}\|_{L^{\hat q}}\|v\|_{L^{\hat p}} \\
& = \|u\|^{p-1}_{L^{(p-1)\hat q}}\|v\|_{L^{\hat p}}.
\end{align*}
Assume that $(p-1)\hat q\le \hat p$, which means
$$
p\le \frac{d+2\theta}{d-2}.
$$
By Lemma \ref{lm:em1}, we have
$$
\mathbb H^1\subset H_0^1(\Omega)\subset L^{\hat p}(\Omega)\subset L^{(p-1)\hat q}(\Omega).
$$
Therefore
\begin{align*}
\||u|^{p-1}v\|_{L^q} \le C \|u\|^{p-1}_{\mathbb H^1}\|v\|_{\mathbb H^1},
\end{align*}
where $C$ is a positive constant which does not depend on $u$ and $v$. Moreover, we have $$L^q(\Omega) \subset H^{-\theta}\subset \mathbb H^{-\theta}.$$ Hence,
\begin{align}
\||u|^{p-1}v\|_{\mathbb H^{-\theta}} \le C \|u\|^{p-1}_{\mathbb H^1}\|v\|_{\mathbb H^1}.\label{ex-1}
\end{align}
When $u,v\in C([0,T];\mathbb H^1)$, applying the inequality  \eqref{ex-1}, we have
\begin{align}\label{ex-2}
\||u(t)|^p - |v(t)|^p\|_{\mathbb H^{-\theta}} \le C (\|u(t)\|^{p-1}_{\mathbb H^1}+\|v(t)\|^{p-1}_{\mathbb H^1})\|u(t)-v(t)\|_{\mathbb H^1}.
\end{align}
For $\mathcal Hu (t) = \ell*u$, we have
$$
\|(\chi\cdot\nabla)[\mathcal H u(t) - \mathcal H v(t)]\| \le \|\chi\|_\infty \cdot\|\mathcal H u(t) - \mathcal H v(t)\|_{\mathbb H^1}.
$$
Because $L^2(\Omega)\subset \mathbb H^{-\theta}$, we get
\begin{align}\label{ex-3}
\|(\chi\cdot\nabla)[\mathcal H u(t) - \mathcal H v(t)]\|_{\mathbb H^{-\theta}} \le C_\theta\|\chi\|_\infty \cdot\|\mathcal H u(t) - \mathcal H v(t)\|_{\mathbb H^1},
\end{align}
with $C_\theta$ is a positive constant. Combining \eqref{ex-2} and \eqref{ex-3}, it leads to
\begin{align*}
\|f(u(t),\mathcal Hu(t))-f(v(t),\mathcal Hv(t)\|_{\mathbb H^{-\theta}} & \le C (\|u(t)\|^{p-1}_{\mathbb H^1}+\|v(t)\|^{p-1}_{\mathbb H^1})\|u(t)-v(t)\|_{\mathbb H^1} \\
& \quad + C_\theta\|\chi\|_\infty \cdot\|\mathcal H u(t) - \mathcal H v(t)\|_{\mathbb H^1}.
\end{align*}
Thus, the function $f$ satisfies condition (F) with $\mu=1$, $\theta=\delta$ and
\begin{align*}
L_f(\rho) = 2C\rho^{p-1}, \; K_f(\rho)  = C_\theta\|\chi\|_\infty .
\end{align*}
Then, we get the conclusion of Theorem \ref{th-sol} và \ref{th-Holder} when $\|\chi\|_\infty$ is sufficiently small.
\end{example}

\section{Application}
Consider the following problem of identifying parameter 
\begin{align}
\partial_t u - (1+D_t^{\{m\}})\Delta u & = g(x)p(t) + f_1(u) \text{ in } \Omega, t\in (0,T),\label{pe1}\\
u & = 0 \text{ on } \partial\Omega,\; t\ge 0,\label{pe2}\\
u(0) & = \xi \text{ in } \Omega, \label{pe3}\\
\int_\Omega \kappa(x)u(t,x)dx & = \psi(t),\; t\in [0,T],\label{pe4}
\end{align}
where $p(t)$, $t\in [0,T]$, is an unknown parameter, $g\in L^2(\Omega)$ is given. In this model, \eqref{pe4} is the complementary measurement with $\kappa\in H_0^1(\Omega)$ such that $(g,\kappa)\ne 0$, $\psi\in W^{1,1}(0,T)$. 

In this problem, we require further that the kernel $m$ fulfils:
\begin{enumerate}\it
\item[\rm (M$^\star$)] Assumption (M) holds and $m'\in L^1(0,T)$.
\end{enumerate}
Then, problem \eqref{pe1} is rewritten as follows
\begin{align*}
\partial_t u - (1+m_0) \Delta u - m_1*\Delta u & = g(x)p(t) + f_1(u), \;m_0=m(0), m_1=m'.
\end{align*}
Combining with \eqref{pe4}, we obtain
$$
\psi' + (1+m_0)(\nabla u, \nabla\kappa) + m_1*(\nabla u, \nabla\kappa) = (g,\kappa)p(t) + (f_1(u),\kappa).
$$
Therefore,
$$
p(t) = (g,\kappa)^{-1}[\psi' + (1+m_0)(\nabla u, \nabla\kappa) + m_1*(\nabla u, \nabla\kappa)-(f_1(u),\kappa)].
$$
Set
$$
f_2(u,\mathcal H u):=(1+m_0)(\nabla u, \nabla\kappa) + (\nabla (m_1*u), \nabla\kappa)-(f_1(u),\kappa),
$$
where $\mathcal H$ is the convolution operator with the kernel $m_1$.

Assume that $f_1:\mathbb H^1\to L^2(\Omega)$ verifies the condition
$$
\|f_1(u)-f_1(v)\| \le L_1 \|u-v\|_{\mathbb H^1}, \text{ for all } u,v\in\mathbb H^1.
$$
Then, for all $u,v\in  C([0,T];\mathbb H^1)$, we have
\begin{align*}
|f_2(u,\mathcal H u)-f_2(v,\mathcal H v)| & \le (1+m_0)\|\kappa\|_{\mathbb H^1}\|u-v\|_{\mathbb H^1}\\
& \qquad + \|m_1*(u-v)\|_{\mathbb H^1}\|\kappa\|_{\mathbb H^1} + L_1\|\kappa\|\|u-v\|_{\mathbb H^1}.
\end{align*}
Hence,
$f(u,\mathcal Hu):= g (g,\kappa)^{-1} f_2(u,\mathcal Hu) + f_1(u)$ verifies the conditions of Theorem  \ref{th-sol-ad} with
\begin{align*}
L_f^*  &=|(g,\kappa)^{-1}|  \|g\| [(1+m_0)\|\kappa\|_{\mathbb H^1} + L_1\|\kappa\|] + L_1,\\
K_f^* &=  |(g,\kappa)^{-1}|  \|g\|\|\kappa\|_{\mathbb H^1}.
\end{align*}
Therefore, problem \eqref{pe1}-\eqref{pe4} is solvable in the sense that there exists a unique mild solution $(u,p)\in C([0,T];\mathbb H^1)\times C([0,T])$.

\end{document}